\newtheorem{theorem}{Theorem}[section]
\newtheorem{definition}[theorem]{Definition}
\newtheorem{lemma}[theorem]{Lemma}
\newtheorem{remark}[theorem]{Remark}
\newtheorem*{remarks*}{Remarks}
\newtheorem{proposition}[theorem]{Proposition}
\newtheorem{algorithm}[theorem]{Algorithm}
\newcommand\RR{\mathbb{R}}
\newcommand\ZZ{\mathbb{Z}}
\title[Hausdorff measure of cartesian of Cantor sets]{Hausdorff measure of cartesian product of Cantor sets}
\author{Siyuan Guo}
\address[S. Guo]{Mathematics Department, Johns Hopkins University, 3400 North Charles St, Baltimore, MD 21218-2683, U.S.A}
\email{sguo45@jhu.edu}
\author[Taylor Jones]{Taylor Jones$^*$}
\address[T. Jones]{Applied Mathematics \& Statistics Department, Johns Hopkins University, 3400 North Charles St, Baltimore, MD 21218-2683, U.S.A.}
\email{rjone197@jh.edu}
\thanks{$^*$ Corresponding author.}
\begin{document}
	
	
	\begin{abstract}
		Hausdorff measure and Hausdorff dimension are useful tools to describe fractals. This paper investigates the bounds on the $d\log_32$-dimensional Hausdorff measure of the $d$-fold Cartesian product of the $1/3$ Cantor set, $\mathcal C^d$. By applying known theorems on the Hausdorff measure of fractals satisfying the strong open set condition and generalizing what has been done on $\mathcal C^2$, we compute stricter upper and lower bounds for the Hausdorff measure of $\mathcal C^d$ for several small integers $d$.
	\end{abstract}
	
	\maketitle
	
	
	\section{Introduction}
	
	The ternary Cantor set (denoted $\CC$) is commonly studied in many areas of mathematics. The set is famous for its constructions and properties. A common construction of the ternary Cantor set is given as follows.
	
	\begin{definition}[Ternary Cantor Set]
		\label{def: cantor middle thirds}
		Let
		\begin{equation*}
			C_0 = [0,1],
		\end{equation*}
		and for $n\ge1$ let
		\begin{equation*}
			C_n = \frac{C_{n-1}}{3}\cup\left(\frac{2}{3} + \frac{C_{n-1}}{3}\right) := \left\{\frac{x}{3}:x\in C_{n-1}\right\}\cup \left\{\frac{2}{3} + \frac{x}{3} : x\in C_{n-1}\right\}.
		\end{equation*}
		The Cantor set is defined as $\CC := \bigcap_{n=0}^\infty C_n$.
	\end{definition}
	
	\begin{figure}[H]
		\centering
		\begin{tikzpicture}
			\draw[thick] (-3,1.5) -- (3,1.5);
			\draw[|-|] (-3,1.5) -- (-1,1.5);
			\draw[|-|] (1,1.5) -- (3,1.5);
			\node at (-3.75,1.5) {$C_0$};
			\draw[thick] (-3,1) -- (-1,1) (1,1) -- (3,1);
			\draw[|-|] (-3,1) -- (-2.33,1);
			\draw[|-|] (-1.67,1) -- (-1,1);
			\draw[|-|] (3,1) -- (2.33,1);
			\draw[|-|] (1.67,1) -- (1,1);
			\node at (-3.75,1) {$C_1$};
			\draw[thick] (-3,.5) -- (-2.33,.5) (-1.67,.5) -- (-1,.5) (1,.5) -- (1.67,.5) (2.33,.5) -- (3,.5);
			\draw[|-|] (-3,0.5) -- (-2.78,0.5);
			\draw[|-|] (-2.56,0.5) -- (-2.33,0.5);
			\draw[|-|] (-1.67,0.5) -- (-1.44,0.5);
			\draw[|-|] (-1.22,0.5) -- (-1,0.5);
			\draw[|-|] (3,0.5) -- (2.78,0.5);
			\draw[|-|] (2.56,0.5) -- (2.33,0.5);
			\draw[|-|] (1.67,0.5) -- (1.44,0.5);
			\draw[|-|] (1.22,0.5) -- (1,0.5);
			\node at (-3.75,0.5) {$C_2$};
			\draw[thick] (-3,0) -- (-2.78,0) (-2.56,0) -- (-2.33,0) (-1.67,0) -- (-1.44,0) (-1.22,0) -- (-1,0) (3,0) -- (2.78,0) (2.56,0) -- (2.33,0) (1.67,0) -- (1.44,0) (1.22,0) -- (1,0);
			\draw[|-|] (-3,0) -- (-2.78,0);
			\draw[|-|] (-2.56,0) -- (-2.33,0);
			\draw[|-|] (-1.67,0) -- (-1.44,0);
			\draw[|-|] (-1.22,0) -- (-1,0);
			\draw[|-|] (3,0) -- (2.78,0);
			\draw[|-|] (2.56,0) -- (2.33,0);
			\draw[|-|] (1.67,0) -- (1.44,0);
			\draw[|-|] (1.22,0) -- (1,0);
			\node at (-3.75,0) {$C_3$};
			\node at (-3.75,-0.5) {$\vdots$};
			\node at (-2,-0.5) {$\vdots$};
			\node at (0,-0.5) {$\vdots$};
			\node at (2,-0.5) {$\vdots$};
		\end{tikzpicture}
		\caption{The Ternary Cantor set $\CC$, drawn to stage 3.}
		\label{fig: cator middle thirds set}
	\end{figure}
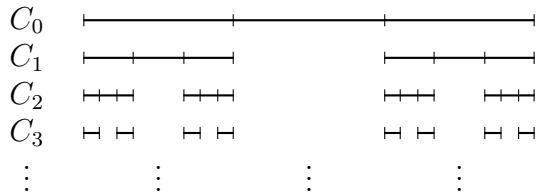
	
	The ternary Cantor set is particularly important in the study of Fractal Geometry, since the set itself is a ``fractal.'' In terms of the Lebesgue measure defined over $\R$, the set has measure $0$ since the total removed measure of the set through each level construction sums up to 1, so the set has measure 0 by the monotonicity that $\bigcap_{i=1}^NC_i\searrow \CC$.\\
    
    As a null sets in terms of Lebesgue measure on its dimension, a feature to characterize this set further is by its $s$-dimensional Hausdorff measure.
    
	\begin{definition}[$s$-dimensional Hausdorff Measure]
		Let $s\in \R^+$ be fixed. For any set $F\subset\R^d$, we define the $s$-dimensional Hausdorff measure as the limit:
		\begin{equation*}
			\CH^s(F) := \lim_{\delta\to 0}\left[\inf\left\{\sum_{i=1}^\infty|U_i|^s:|U_i| \leq \delta \mbox{ for all } i \mbox{ and } \bigcup_{i=1}^\infty U_i \supset F\right\}\right].
		\end{equation*}
		Note that the limit always exist or is $+\infty$, because the infimum monotonically increases as $\delta\to 0$, since any collection of sets $\{U_i\}_{i = 1}^\infty$ satisfying a smaller $\delta$ naturally satisfies a larger $\delta$.
	\end{definition}
	
	With this definition, the Hausdorff measures with respect to different choices of $s$ follow a distinct pattern.
	
	\begin{proposition}[Section 2.2 in~\cite{Falconer}]
		Suppose $F\subset\R^n$ and $\CH^s(F) < \infty$. If $t > s$, then $\CH^t(F) = 0$.
	\end{proposition}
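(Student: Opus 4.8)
The plan is to exploit the scaling relationship between the $t$-sum and the $s$-sum of a cover whose pieces are all small. Fix $\delta > 0$ and let $\{U_i\}_{i=1}^\infty$ be any countable cover of $F$ with $|U_i| \le \delta$ for every $i$. Since $t > s$, I would write $|U_i|^t = |U_i|^{t-s}\,|U_i|^s$ and use $|U_i| \le \delta$ together with $t - s > 0$ to obtain the pointwise bound $|U_i|^t \le \delta^{t-s}\,|U_i|^s$. Summing over $i$ then yields
\[
	\sum_{i=1}^\infty |U_i|^t \;\le\; \delta^{t-s}\sum_{i=1}^\infty |U_i|^s.
\]

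Next I would pass to infima over all admissible covers. Writing $\CH^s_\delta(F)$ for the infimum appearing inside the limit in the definition of $\CH^s$, the displayed inequality holds for every such cover, so taking the infimum on the left (and bounding the right-hand side by the infimum of its $s$-sums) produces
\[
	\CH^t_\delta(F) \;\le\; \delta^{t-s}\,\CH^s_\delta(F).
\]

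Finally I would let $\delta \to 0$. Because the $\delta$-approximations increase to the measure as $\delta$ decreases, and $\CH^s(F) < \infty$ by hypothesis, we have $\CH^s_\delta(F) \le \CH^s(F) < \infty$ uniformly in $\delta$. Hence $\CH^t_\delta(F) \le \delta^{t-s}\,\CH^s(F) \to 0$ as $\delta \to 0$, since $t - s > 0$. Taking the limit gives $\CH^t(F) \le 0$, and nonnegativity of the measure forces $\CH^t(F) = 0$.

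The argument is almost entirely routine; the only point that genuinely requires care is the \emph{uniform} finiteness $\CH^s_\delta(F) \le \CH^s(F) < \infty$, which is precisely where the hypothesis $\CH^s(F) < \infty$ enters. Without it the right-hand side $\delta^{t-s}\,\CH^s_\delta(F)$ could be an indeterminate product of the form $0 \cdot \infty$, and the conclusion would in fact fail, so I would be sure to flag this as the crux of the proof rather than a formality.
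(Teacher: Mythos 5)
Your proof is correct and is exactly the standard argument the paper defers to (Falconer, Section~2.2): bound $\sum_i|U_i|^t\le\delta^{t-s}\sum_i|U_i|^s$ for $\delta$-covers, pass to infima to get $\CH^t_\delta(F)\le\delta^{t-s}\CH^s_\delta(F)\le\delta^{t-s}\CH^s(F)$, and let $\delta\to0$. You are also right to flag the finiteness hypothesis $\CH^s(F)<\infty$ as the crux, since it is what rules out the $0\cdot\infty$ degeneracy in the limit.
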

	
	Thus, the graph of $\CH^s(F)$ against $s$ has some ``jump'' discontinuity from $\infty$ to $0$, as there could only be at most one $s$ with \textit{non-zero and finite} Hausdorff measure, which directly leads to the definition of the Hausdorff dimension.
	
	\begin{definition}[Hausdorff Dimension]
		Let $F\subset\R^n$, the Hausdorff dimension, denoted $\dim_{\rm H}F$, is defined as:
		\begin{equation*}
			\dim_{\rm H}F = \inf\{s\geq 0 : \CH^s(F) = 0\} = \sup\{s : \CH^s(F) = \infty\}.
		\end{equation*}
	\end{definition}

    When this pattern is plotted on a graph, we will clearly notice the jump at $\dim_{\rm H}F$. Although we do not know the exact value at of $\CH^{\dim_{\rm H}F}F$.

    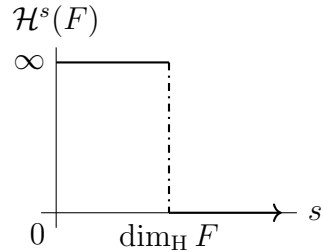
\begin{figure}[h]
        \centering
        \begin{tikzpicture}
            \draw (0,0) node[anchor=north east] {$0$};
            \draw (0,2) node[anchor=east] {$\infty$};
            \draw (1.5,0) node[anchor=north] {$\dim_{\rm H}F$};
            \draw (-0.2,0) -- (3.2,0) node[right] {$s$};
            \draw (0,-0.2) -- (0,2.2) node[above] {$\CH^s(F)$};
            \draw[thick] (0, 2) -- (1.5, 2);
            \draw[thick, dash dot] (1.5, 2) -- (1.5, 0);
            \draw[thick, ->] (1.5, 0) -- (3, 0);
        \end{tikzpicture}
        \caption{Graph of $\CH^s(F)$ against $s$ with jump at its dimension}
    \end{figure}
	
	For the case of the ternary Cantor set, its Hausdorff dimension is known to be $\log_32$, and it was shown by Felix Hausdorff himself in 1919~\cite{Hausdorff1919} (in German) that the exact Hausdorff measure of the ternary Cantor set is $\CH^{\log_32}(\CC) = 1$, in which he denotes it as $L_\rho = 1$ in the paper.\\

    While the Hausdorff measure of the ternary Cantor set and many fractals of Hausdorff dimension less than 1 is well studied (as explain in~\cite{MESURESDE}), there have been limited amount of study conducted on fractals with higher Hausdorff dimension ($\geq 1$).\\
	
	Particularly, the Cartesian products of the Cantor sets (denoted $\CC^d$ for the $d$-th product) remains not well studied. Specifically, we have the Cartesian product defined as:
	\begin{equation*}
		\CC^d := \{(x_1,\cdots, x_d)\in\R^d:x_i\in\CC\mbox{ for all }1\leq i \leq d\}.
	\end{equation*}
	
	Here, we can interpret the products of Cantor sets as a self-similar set generated by an Iterated Function System (IFS), composed of various contraction mappings $\{S_1, S_2,\cdots, S_{2^d}\}$ which satisfies the open set condition (defined below), where $S_i$ is a contraction scaled by $\frac{1}{3}$ into different segments, which can be considered a self-similar set.\\
    
	
	\begin{definition}
		We explicitly define the functions $S_1,\dots,S_{2^d}:\R^{d}\to\R^{d}$, as follows:
		\begin{enumerate}
			\item Let
			\[V_d:=\{\textbf{i}=(i_1,\dots,i_d)\in\RR^d:i_k\in\{0,1\}\:\forall\:k=1,2,\dots,d\}.\]
			For each $\textbf{i}\in V_d$ let $\kappa(\textbf{i})$ be the lexicographical position of $\textbf{i}$. E.g. $\kappa(0,0\dots,0)=1$, $\kappa(0,0\dots,1)=2,$ $\kappa(0,0,\dots,1,0)=3$, and so-on until $\kappa(1,1,\dots,1)=2^d$.
			\item  For each $i\in\{1,\dots,2^d\}$ let $\textbf{i}=\kappa^{-1}(i)$ and for any $d\times1$ vector $\textbf{x}$ define
			\[S_{i}(\textbf{x}):=\frac{1}{3}I_d\textbf{x}+\frac{2}{3}\textbf{i}^T\]
			where $I_d$ is the $d\times d$ identity matrix.
		\end{enumerate}
	\end{definition}

    So 
    \[\CC^d = \left\{\lim_{n\to\infty}S_{i_1}\circ S_{i_2}\circ\cdots\circ S_{i_n}(0):i_k\in\{1,2,\dots,2^d\}\:\text{for all }\: k\right\}\]
	
	Meanwhile, we provide a labeling for the $S_i([0,1]^3)$ with $i = 1,2,\cdots, 8$ in $\CC^3$:
	
	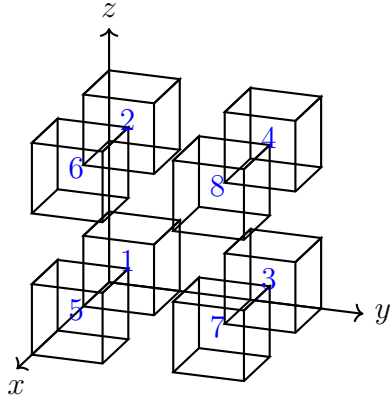
\begin{figure}[h]
		\begin{tikzpicture}[scale=3]
			\tdplotsetmaincoords{70}{110}
			
			\begin{scope}[tdplot_main_coords]
				\draw[->, thick] (0, 0, 0) -- (1.2, 0, 0) node[anchor=north] {$x$};
				\draw[->, thick] (0, 0, 0) -- (0, 1.2, 0) node[anchor=west] {$y$};
				\draw[->, thick] (0, 0, 0) -- (0, 0, 1.2) node[anchor=south] {$z$};
				
				\foreach \x in {0, 2/3} {
					\foreach \y in {0, 2/3} {
						\foreach \z in {0, 2/3} {
							\coordinate (A) at (\x, \y, \z);
							\coordinate (B) at (\x + 1/3, \y, \z);
							\coordinate (C) at (\x + 1/3, \y + 1/3, \z);
							\coordinate (D) at (\x, \y + 1/3, \z);
							\coordinate (E) at (\x, \y, \z + 1/3);
							\coordinate (F) at (\x + 1/3, \y, \z + 1/3);
							\coordinate (G) at (\x + 1/3, \y + 1/3, \z + 1/3);
							\coordinate (H) at (\x, \y + 1/3, \z + 1/3);
							
							\draw[thick] (A) -- (B) -- (C) -- (D) -- cycle;
							\draw[thick] (E) -- (F) -- (G) -- (H) -- cycle;
							\draw[thick] (A) -- (E) -- (F) -- (B);
							\draw[thick] (C) -- (G) -- (H) -- (D);
						}
					}
				}
				
				\foreach \x in {1, 2, 3, 4, 5, 6, 7, 8} {
					\pgfmathparse{int((\x - 1) / 4)}
					\let\firstcoord\pgfmathresult 
					\pgfmathparse{mod(int((\x - 1) / 2), 2)}
					\let\secondcoord\pgfmathresult 
					\pgfmathparse{mod((\x - 1), 2)}
					\let\thirdcoord\pgfmathresult 
					\draw[blue] (\firstcoord * 2 / 3, \secondcoord * 2 / 3, \thirdcoord * 2 / 3) node[anchor=south west] {\x};
				}
			\end{scope}
		\end{tikzpicture}
		\caption{Labeling of the Quadrants in level 1 basic sets of $\CC^3$.}
	\end{figure}

    While the dimensions are well studied, there have been some criterion for us to quickly conclude the dimension of some IFS.

    \begin{definition}[Open Set Condition]
    	Let $S_i:D\to D$ be contractions in IFS $\{S_i\}_{i=1}^m$, then $\{S_i\}$ satisfy the open set condition if there exists a non-empty bounded open set $V$ such that:
    	\begin{equation*}
    		V\supset \bigcup_{i=1}^mS_i(V).
    	\end{equation*}
    \end{definition}
    
    Note that $\CC^d$ satisfies the open set condition with $V = (0,1)^d$. There is a well known formula for computing the Hausdorff dimension of sets generated by an IFS of similarities meeting the open set condition, from Theorem 9.3 in \cite{Falconer}.

    \begin{theorem}
        Suppose that open set condition holds for the contractions $S_i$ on $\R^n$ with ratios $0 < c_i < 1$ for $1\leq i\leq m$. If $F$ is the attractor of the IFS $\{S_1,\cdots,S_m\}$, \ie:
	\begin{equation*}
		F = \bigcup_{i=1}^m S_i(F),
	\end{equation*}
	then $\dim_{\rm H}F=\dim_{\rm B}F = s$, where $s$ is given by:
	\begin{equation*}
		\sum_{i=1}^{m}c_i^s = 1.
	\end{equation*}
	Moreover, for this value of $s$, $0 < \CH^s(F) < \infty$.
    \end{theorem}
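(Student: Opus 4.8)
The plan is to split the argument into an upper bound, which follows directly from the self-similar structure, and a lower bound, which requires the open set condition through a mass distribution argument. For the upper bound, I would iterate the identity $F=\bigcup_{i=1}^m S_i(F)$ to obtain, for each $k$,
\[
F=\bigcup_{(i_1,\dots,i_k)}S_{i_1}\circ\cdots\circ S_{i_k}(F).
\]
Writing $c_{\mathbf i}:=c_{i_1}\cdots c_{i_k}$ for a word $\mathbf i=(i_1,\dots,i_k)$, each piece $S_{\mathbf i}(F)$ has diameter $c_{\mathbf i}|F|$ since the $S_i$ are similarities, and $|F|<\infty$ because $F$ is bounded. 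As $\max_i c_i<1$, these diameters shrink uniformly to $0$ as $k\to\infty$, so for any $\delta>0$ they form an admissible $\delta$-cover once $k$ is large. Summing gives
\[
\sum_{\mathbf i}|S_{\mathbf i}(F)|^s=|F|^s\sum_{\mathbf i}c_{\mathbf i}^s=|F|^s\Big(\sum_{i=1}^m c_i^s\Big)^k=|F|^s,
\]
using the defining relation $\sum_i c_i^s=1$. Hence $\CH^s(F)\le|F|^s<\infty$ and $\dim_{\rm H}F\le s$.

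For the lower bound I would define a mass distribution $\mu$ on $F$ by assigning to each cylinder the mass $\mu(S_{\mathbf i}(F))=c_{\mathbf i}^s$; because $\sum_i c_i^s=1$ this is consistent across levels and extends to a Borel probability measure supported on $F$. The goal is then to verify the hypothesis of the mass distribution principle, namely that $\mu(U)\le C|U|^s$ for a fixed constant $C$ and all sets $U$, which yields $\CH^s(F)\ge\mu(F)/C=1/C>0$. It suffices to bound $\mu(B(x,r))$ for balls, since any $U$ lies in a ball of radius $|U|$.

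The heart of the proof, and the step I expect to be the main obstacle, is this ball estimate, which is exactly where the open set condition enters. Fixing $r$, I would stop each infinite word at the first truncation $\mathbf i=(i_1,\dots,i_k)$ for which $(\min_i c_i)\,r\le c_{\mathbf i}<r$, producing a finite antichain $Q$ whose cylinders cover $F$. The open set condition guarantees that the open sets $\{S_{\mathbf i}(V)\}_{\mathbf i\in Q}$ are pairwise disjoint, and each is a similar copy of $V$ of ratio $c_{\mathbf i}\approx r$; in particular each contains a ball of radius $\gtrsim r$ and sits inside a ball of radius $\lesssim r$. A volume/packing argument then shows that only boundedly many $\mathbf i\in Q$, at most a constant $q=q(n,V,\min_i c_i)$ independent of $r$ and $x$, can have $S_{\mathbf i}(F)$ meeting $B(x,r)$, since their disjoint copies of $V$ all lie in a fixed dilate of $B(x,r)$ while each occupies volume $\gtrsim r^n$. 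Therefore
\[
\mu(B(x,r))\le\!\!\sum_{\mathbf i\in Q,\,S_{\mathbf i}(F)\cap B(x,r)\ne\emptyset}\!\!c_{\mathbf i}^s\le q\,r^s,
\]
which is the required estimate after absorbing the passage from diameter to radius into the constant.

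This gives $\CH^s(F)>0$, and combined with the upper bound we conclude $\dim_{\rm H}F=s$ with $0<\CH^s(F)<\infty$. Finally, the same antichain $Q$ furnishes a cover of $F$ by roughly $r^{-s}$ sets of diameter comparable to $r$ (their masses sum to $1$ and each is about $r^s$), bounding the upper box dimension by $s$; together with the general inequality $\dim_{\rm H}F\le\dim_{\rm B}F$ this forces $\dim_{\rm B}F=s$ as well.
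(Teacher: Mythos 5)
Your proposal is correct, but note that the paper does not prove this statement at all: it is quoted verbatim as Theorem 9.3 of Falconer's \emph{Fractal Geometry} and used as a black box. What you have written is, in essence, Falconer's own proof from that reference: the upper bound by iterating the self-similarity and covering $F$ with the cylinders $S_{\mathbf i}(F)$ of a fixed generation, whose $s$-powers of diameters sum to $|F|^s$ by the defining relation $\sum_i c_i^s=1$; the lower bound by the mass distribution principle, with the open set condition entering exactly through the packing count (Falconer's Lemma 9.2) showing that only boundedly many stopping-time cylinders with $c_{\mathbf i}\asymp r$ can meet a ball $B(x,r)$, since their disjoint copies of $V$ each occupy volume $\gtrsim r^n$ inside a fixed dilate of the ball; and the box-dimension statement from the cardinality bound $\#Q\le(\min_i c_i)^{-s}r^{-s}$ on the stopping antichain. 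The one point worth tightening is the construction of $\mu$: since distinct cylinders $S_{\mathbf i}(F)$ may intersect under the open set condition (the pieces can touch), the assignment $\mu(S_{\mathbf i}(F))=c_{\mathbf i}^s$ is not literally consistent as stated; one should define the Bernoulli-type measure on the code space $\{1,\dots,m\}^{\mathbb{N}}$, where cylinders genuinely have mass $c_{\mathbf i}^s$, and push it forward to $F$. The pushforward still satisfies $\mu(A)\le\sum c_{\mathbf i}^s$ over the antichain cylinders meeting $A$, which is all your ball estimate needs, so the argument survives intact.
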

    
    Hence, the $d$-th product of the Cantor set shall be the attractor of the IFS composed of $2^d$ mappings, each scaled by $\frac{1}{3}$, so we correspondingly have the Hausdorff dimension as:
	\begin{equation}
		\label{Hausdorff dim d}
		s_d := \dim_{\rm H}(\CC^d) = d\log_3(2),
	\end{equation}
	and we have the trivial bounds directly from the theorem:
	\begin{equation}
		0 < \CH^{s_d}(\CC^d) < \infty. \label{eq::trivial-bd}
	\end{equation}
	
	The proof of Hausdorff measure of 1 dimensional ternary Cantor set being 1, \ie, $\CH^s(\CC) = 1$, can be found on various texts such as~\cite{FalconerII}, but the measure of higher dimensional ternary Cantor sets remain unknown.\\
	
	Currently, the best approximation of the 2 dimensional Cartesian product of cantor sets is achieved in~\cite{Deng} in 2012:
	\begin{align*}
		1.48329 \leq \CH^{s_2}(\CC^2) \leq 1.500886.
	\end{align*}

    Moreover, there have not been any study on the higher products of the Cantor sets.\\
    
	The goal of this paper is to extend the results of~\cite{Deng} to higher dimensional Cartesian products of the Cantor set, namely, we would want to think about the upper bound and lower bounded of the measure of $\CC^d$ that is better than the na\"ive bounds for the Cantor sets.\\
	
	In particular, the na\"ive upper bound can be derived from the definition of the Hausdorff measure for all Cantor sets $\CC^d$. Consider the level $k$ construction of the product of the Cantor Set, there will be, respectively, $(2^d)^k = 2^{d\cdot k}$ products of subintervals, in which the intervals has length $3^{-k}$, so the corresponding diameter of each cover is $\sqrt{d\cdot 3^{-2k}} = \sqrt{d}\cdot3^{-k}$. Hence, at each level $k$, the $s$-dimensional Hausdorff measure with the covering set diameters no less than $\sqrt{d}\cdot 3^{-k}$ is:
	\begin{equation*}
		2^{d\cdot k}\cdot (\sqrt{d}\cdot 3^{-k})^s = 2^{dk} \cdot d^{d\log_3(2)/2}\cdot 3^{-kd\log_3(2)} = \left(\frac{2}{2}\right)^{kd}\cdot d^{d\log_3(2)/2} = d^{d\log_3(2)/2}.
	\end{equation*}
	Note that as we get deeper with the layers, $k\to\infty$ and $\delta\to 0$, and because of the definition of infimum, we naturally have the upper bound of the Hausdorff measure at dimension $s$ for the $d$-th Cartesian product of the Cantor set as $d^{d\log_3(2)/2}$, \ie:
	\begin{equation}
		\CH^s(\CC^d) \leq d^{d\log_3(2)/2}. \label{eq::naive-upper}
	\end{equation}
	
	A few upper bounds can be numerically computed.
	
	\begin{figure}[H]
		\begin{center}
			\begin{tikzpicture}[x=1.5cm, y=0.2cm]
				\draw[->, line width=0.5pt] (0,0) -- (7,0) node[right] {$d$};
				\draw[->, line width=0.5pt] (0,0) -- (0,32) node[above] {$y$};
				
				\draw[color=blue, variable=\x, samples=100, domain=1:6, dashed] plot ({\x}, {\x^( (\x * ln(2)/ln(3)) / 2 )});
				\footnotesize
				\draw[fill=blue] (1,1) circle (1pt) node[above] {$(1,1)$};
				\draw[fill=blue] (2,1.54856265263) circle (1pt) node[above] {$(2,1.5485)$};
				\draw[fill=blue] (3,2.82842712475) circle (1pt) node[above] {$(3,2.8284)$};
				\draw[fill=blue] (4,5.75062600477) circle (1pt) node[above] {$(4,5.7506)$};
				\draw[fill=blue] (5,12.6620035654) circle (1pt) node[above] {$(5,12.6620)$};
				\draw[fill=blue] (6,29.7081993809) circle (1pt) node[above] {$(6,29.7081)$};
				
				\draw[fill=black] (1,0) circle (1pt) node[below] {1};
				\draw[fill=black] (2,0) circle (1pt) node[below] {2};
				\draw[fill=black] (3,0) circle (1pt) node[below] {3};
				\draw[fill=black] (4,0) circle (1pt) node[below] {4};
				\draw[fill=black] (5,0) circle (1pt) node[below] {5};
				\draw[fill=black] (6,0) circle (1pt) node[below] {6};
				
				\draw[fill=black] (0,5) circle (1pt) node[left] {5};
				\draw[fill=black] (0,10) circle (1pt) node[left] {10};
				\draw[fill=black] (0,15) circle (1pt) node[left] {15};
				\draw[fill=black] (0,20) circle (1pt) node[left] {20};
				\draw[fill=black] (0,25) circle (1pt) node[left] {25};
				\draw[fill=black] (0,30) circle (1pt) node[left] {30};
			\end{tikzpicture}
		\end{center}
		\caption{Na\"ive Upper Bound for the Hausdorff Measure for $\CC^d$ with $d = 1,\cdots, 6$.}
	\end{figure}
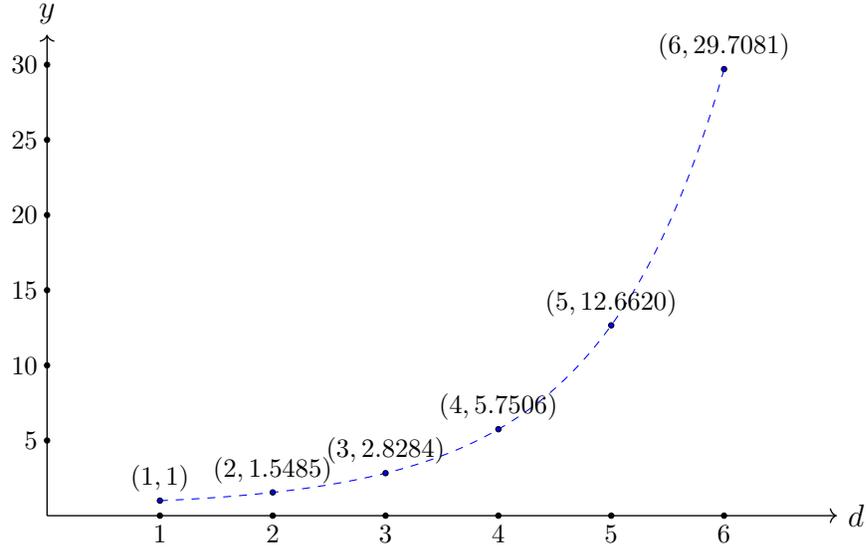
	
	Throughout the paper, we use a very important theorem connecting the $s$ dimensional measure with the induced probabilistic measure, which requires stronger condition for open set conditions.

    \begin{definition}[Strong Open Set Condition]
        Let the IFS $\{S_i\}_{i=1}^m$ satisfies the open set condition. It is said to satisfy the strong open set condition if there exists a bounded open set $V\subset\R^d$ such that $V\cap E\neq\emptyset$, $S_i(V)\subset V$ for all $i$, and $S_i(V)\cap S_j(V) = \emptyset$ while $d\big(S_i(V),S_j(V)\big) > 0$ for all $i\neq j$. Specifically, we use the distance (not a metric) here as:
        \begin{equation*}
            d(X,Y) := \inf\left\{d(x,y):x\in X, y\in Y\right\},
        \end{equation*}
        for all $X,Y\subset \mathbb{R}^d$.
    \end{definition}
    
    With the strong open set condition, in Theorem 3.3 in~\cite{Moran}, the main result is:
    
	\begin{theorem}\label{thm::cvx-set}
		Given a self similar set $E\subset\R^d$ satisfying the strong open set condition. Let $s$ be the Hausdorff dimension of $E$, then:
		\begin{equation}
			\CH^s(E) = \min\left\{\frac{|A|^s}{\mu(A)}: A \mbox{ is convex and } c\leq |A| \leq |E|\right\}, \label{eq::convexinf}
		\end{equation}
        where $\mu$ is the probability measure defined as:
		\begin{equation*}
			\mu(A) = \frac{\CH^s(A\cap E)}{\CH^s(A)},
		\end{equation*}
        and $c:=\min_{i\neq j} d\big(S_i(E), S_j(E)\big)$.
	\end{theorem}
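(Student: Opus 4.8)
The plan is to prove the two inequalities $\CH^s(E)\ge m$ and $\CH^s(E)\le m$ separately, where $m$ denotes the right-hand side of~\eqref{eq::convexinf} and $\mu$ is the natural self-similar probability measure on $E$, i.e. the normalized restriction $\mu=\CH^s(\,\cdot\,\cap E)/\CH^s(E)$, so that $\mu(E)=1$. The first reduction I would make is to replace the constrained family $\{A:c\le|A|\le|E|\}$ by the family of \emph{all} convex sets. The key observation is that the functional $A\mapsto|A|^s/\mu(A)$ is invariant under the maps of the IFS: since each $S_i$ is a similarity with ratio $c_i$, the self-similar measure satisfies $\mu(S_{\mathbf w}(B))=\bigl(\prod_j c_{i_j}^s\bigr)\mu(B)$ while $|S_{\mathbf w}(B)|=\bigl(\prod_j c_{i_j}\bigr)|B|$, so the two scalings cancel in the ratio (here $\mathbf w=(i_1,\dots,i_k)$ is a word and overlaps are $\mu$-null by the open set condition). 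Using the strong open set condition, any convex $A$ with $|A|<c$ meets $E$ inside a single first-level cylinder $S_i(E)$; applying $S_i^{-1}$ enlarges the diameter by $1/c_i>1$ without changing the ratio, and iterating moves $A$ into the window $c\le|A|\le|E|$ (oversized images are disposed of by comparison with the convex hull of $E$, whose ratio is $|E|^s$). Hence $m=\inf\{|A|^s/\mu(A):A\text{ convex}\}$, and a Blaschke-selection argument together with the upper semicontinuity of $A\mapsto\CH^s(A\cap E)$ on closed convex sets shows the infimum is attained, justifying the ``$\min$''.

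For the lower bound $\CH^s(E)\ge m$, I would take an arbitrary $\delta$-cover $\{U_i\}$ of $E$ and pass to convex hulls $\widehat{U_i}$, noting $|\widehat{U_i}|=|U_i|$ and $\mu(\widehat{U_i})\ge\mu(U_i)$. By the reduction above, every convex set satisfies $|\widehat{U_i}|^s\ge m\,\mu(\widehat{U_i})$, so summing and using countable subadditivity of $\mu$ gives
\[\sum_i|U_i|^s\ge m\sum_i\mu(\widehat{U_i})\ge m\,\mu(E)=m.\]
Taking the infimum over covers and then letting $\delta\to0$ yields $\CH^s(E)\ge m$.

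The upper bound $\CH^s(E)\le m$ is where I expect the real work to be, and I would obtain it from the density theorem for $s$-sets. Since $0<\CH^s(E)<\infty$ by~\eqref{eq::trivial-bd}, $E$ is an $s$-set, and its upper convex density equals $1$ at $\CH^s$-almost every point $x\in E$ (see, e.g.,~\cite{Falconer}). Fixing such an $x$ and $\eps>0$, there is an arbitrarily small convex set $A\ni x$ with $\CH^s(A\cap E)\ge(1-\eps)|A|^s$, equivalently $|A|^s/\mu(A)\le\CH^s(E)/(1-\eps)$. After rescaling $A$ into the admissible range by the scale-invariance above, this produces a feasible competitor whose value is at most $\CH^s(E)/(1-\eps)$, so $m\le\CH^s(E)/(1-\eps)$; letting $\eps\to0$ gives $m\le\CH^s(E)$. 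Combined with the lower bound this gives $\CH^s(E)=m$.

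The main obstacles are the two reductions rather than the covering estimates. First, one must verify carefully that the gap constant $c$ forces small convex sets into a single cylinder at \emph{every} level, so that the rescaling genuinely preserves the ratio and terminates inside the window $c\le|A|\le|E|$. Second, one must import the almost-everywhere upper-convex-density statement in exactly the normalization used here and, via the self-similar scale-invariance, turn it into the global comparison $\CH^s(A\cap E)\le|A|^s$ that underlies the ``$\min$'' formula. The semicontinuity and compactness needed to attain the minimum are routine once the constraint $|A|\ge c>0$ keeps the competing convex sets nondegenerate.
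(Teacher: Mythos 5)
A preliminary remark: the paper does not prove this theorem at all --- it is quoted as Theorem 3.3 of \cite{Moran} --- so there is no in-paper proof to compare against, and I assess your argument on its own terms. (Your silent renormalization, reading $\mu(A)=\CH^s(A\cap E)/\CH^s(E)$ rather than the paper's printed $\CH^s(A\cap E)/\CH^s(A)$, is the correct interpretation; as printed $\mu$ is not even a measure.) Your scale-invariance reduction to the window $c\le|A|\le|E|$ is sound, including the disposal of oversized pull-backs via $\mathrm{conv}(E)$, and the Blaschke-selection step is indeed routine. The genuine gap is that your two halves prove the \emph{same} inequality. The convex-hull covering argument gives $\sum_i|U_i|^s\ge m\sum_i\mu(\widehat{U_i})\ge m\,\mu(E)=m$ for every $\delta$-cover, i.e.\ $\CH^s(E)\ge m$. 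Your ``upper bound'' paragraph, despite its header, then concludes $m\le\CH^s(E)$ --- literally the statement $\CH^s(E)\ge m$ again. This is forced by the shape of the argument: the a.e.\ density statement can only manufacture \emph{good competitors} $A$ with small ratio $|A|^s/\mu(A)$, and exhibiting competitors only ever bounds the infimum $m$ from above. The direction actually missing is $\CH^s(E)\le m$, i.e.\ that \emph{every} convex $A$ in the window satisfies $|A|^s/\mu(A)\ge\CH^s(E)$, equivalently the uniform comparison $\CH^s(A\cap E)\le|A|^s$. You correctly name this comparison in your closing paragraph, but an a.e., small-scale, $\limsup$ statement about density does not by itself control $\CH^s(A\cap E)/|A|^s$ for a fixed macroscopic $A$, and no argument for it appears in the proposal.

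Two standard ways to close the gap, both using the self-similar structure you already set up. (i) Blow-up by contradiction: if some convex $A_0$ had $\CH^s(A_0\cap E)\ge(1+\theta)|A_0|^s$ with $\theta>0$, then each image $S_{\mathbf{w}}(A_0)$ over a word $\mathbf{w}$ is convex with the same density ratio (overlaps are $\mu$-null under the open set condition), so every point of $\limsup_{n}\bigcup_{|\mathbf{w}|=n}S_{\mathbf{w}}(A_0\cap E)$ has upper convex density at least $1+\theta$; since $\mu$ is finite, $\mu(\limsup_n F_n)\ge\limsup_n\mu(F_n)=\mu(A_0)>0$, contradicting the ``$\le 1$'' half of the density theorem on a set of positive measure. (ii) Constructively, as in the cited source: given convex $A$ with $\mu(A)>0$, the family $\{S_{\mathbf{w}}(A)\}$ is a Vitali class for $\mu$-almost all of $E$, so the Vitali covering theorem for the finite measure $\mu$ yields a countable \emph{disjoint} subfamily covering $E$ up to a $\mu$-null set, whence $\sum_j|S_{\mathbf{w}_j}(A)|^s\le\bigl(|A|^s/\mu(A)\bigr)\sum_j\mu\bigl(S_{\mathbf{w}_j}(A)\bigr)\le|A|^s/\mu(A)$, and the residual null set is covered at negligible $s$-cost; this gives $\CH^s(E)\le|A|^s/\mu(A)$ for every admissible $A$, hence $\CH^s(E)\le m$. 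Either route supplies the half your proposal lacks; without one of them the proof establishes only $m\le\CH^s(E)$.
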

	
	Notice that we have $\min$ on all possible sets, so the value is achievable, and the set satisfying the minimum is special, and we can introduce the idea of an optimal set.\\
	
	\begin{definition}[Optimal Set]
		The set $A$ in Theorem \ref{thm::cvx-set} that achieves the minimum in equation (\ref{eq::convexinf}) is called an optimal set.
	\end{definition}
	
	
	\section{Main results on Upper Bounds}
	
	The discussion on the upper bound is relatively more straightforward since, by the definition of the upper bounds with \textit{minimum}, as long as we can find a set, it automatically casts an upper bound.\\
	
	Through the computation, we exploit the symmetry with the Cantor sets to approximate $\mu(A)$ for any potential
	optimal set $A$. Specifically, we consider a $d$-dimensional sphere centered at $(1/2,1/2,\cdots, 1/2)$ and varying the radius
	radius whose circumference exactly touches one of the corner closest to $(0,0,\cdots, 0)$ of one of the fundamental part on some $k$-th depth. Here, we provide the pseudo-code of the experiment procedure.
	
	\begin{algorithm}
		We use the following procedure to compute the upper bound of the Hausdorff measure with $\CC^d$ up to depth $k$.\\
        Due to symmetry about all $x_i = \frac{1}{2}$ planes, our simulation shall just focus on $\CC_L=\CC\cap[0,\frac{1}{2}]$ and its produced. \rm \label{algo::upper-bdd}
		\begin{algorithmic}[1]
			\State We start with the set $S = \{0\}$, then we iteratively apply the iterated function system to $S$ with a desired number of iteration (as our threshold $k$) so we can get an approximation of left endpoints of remaining intervals of $S \approx \CC_L$.
			\State Generate the lattice of the coordinates $S^d$ by iterating all coordinates.
			\State For all the points in $S^d$, compute its squared distance from $(\frac{1}{2},\frac{1}{2},\cdots, \frac{1}{2})\in \R^d$.
			\State Given a fixed distance $r$, for each squared distance, count the number of lattice of coordinates that is within $r$. This is a lower bound of how much basic cubes are covered on this interval up to this threshold, record this number as $N$. 
            \State Hence, we can compute the lower bound of the probability measure as:
            \begin{equation*}
                \mu(A) \geq \frac{N}{2^{kd}}.
            \end{equation*}
            \State This hence leads to the upper bound of the optimal set using equation (\ref{eq::convexinf}).
		\end{algorithmic}
        As a side note, all computations in the first 4 steps utilizes the \texttt{Fraction} class in the programming language to prevent rounding errors that could impact the accuracy of $N$.
	\end{algorithm}
	
	\begin{remark}
		Algorithm \ref{algo::upper-bdd} is a modification and generalization from~\cite{Deng} which deals with the two-dimensional case, which also trade-off some complexity for a more precise result.
		\begin{itemize}
			\item Given a fixed depth $k$ and a fixed dimension $d$, $S$ is consisted of $2^k$ elements and $S^d$ will be of $m:=2^{dk}$ elements. The computation of the squared distances of each coordinate will be $\CO(d)$, so the total time of computation is $\CO(dm)$ and the time of counting the number of lattice can be done in $\CO(m\log m)$ time, so the total computation time is $\CO(dk2^{dk})$.
			\item In the computation, the lower Cartesian products can be computed within reasonable time ($\leq 10$ minutes) with depth up to 12, but we eventually can only get to a depth of 2 for higher dimension.
		\end{itemize}
	\end{remark}
	
	With some use of computer simulation, we have obtained the upper bounded of the following Cartesian products of the Cantor sets.
	
	\begin{theorem}[Upper Bounds on Measure of Cartesian products of the Cantor sets] \label{thm::upperbound}
		\begin{align*}
			&\CH^{s_2}(\CC^2) \leq \hspace{5.25pt}1.500886049123709,\\
			&\CH^{s_3}(\CC^3) \leq \hspace{5.25pt}2.352741546983966,\\
			&\CH^{s_4}(\CC^4) \leq \hspace{5.25pt}4.089697707421688,\\
			&\CH^{s_5}(\CC^5) \leq \hspace{5.25pt}7.502183963990683,\\
			&\CH^{s_6}(\CC^6) \leq 14.810000552236708,\\
			&\CH^{s_7}(\CC^7) \leq 31.501011683100224,\\
			&\CH^{s_8}(\CC^8) \leq 67.52795132236503.
		\end{align*}
	\end{theorem}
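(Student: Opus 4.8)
The plan is to use Theorem~\ref{thm::cvx-set} in its easy direction: since the right-hand side of~(\ref{eq::convexinf}) is a \emph{minimum} over admissible convex sets, \emph{any} single convex set $A$ with $c\le|A|\le|E|$ already yields the one-sided bound $\CH^{s_d}(\CC^d)\le |A|^{s_d}/\mu(A)$. Thus the whole problem reduces to exhibiting one good convex set for each $d$ and controlling the two quantities $|A|$ and $\mu(A)$. First I would pin down the constants that determine admissibility for $E=\CC^d$: the diameter $|E|=\diam(\CC^d)=\sqrt d$ (as $0,1\in\CC$ in each coordinate, the corners $\mathbf 0$ and $\mathbf 1$ lie in $\CC^d$), and the minimal gap $c=\min_{i\neq j}d\big(S_i(E),S_j(E)\big)=1/3$, attained by two level-one subcubes differing in a single coordinate, where the deleted middle third leaves a separation of $2/3-1/3=1/3$. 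A candidate ball will then be admissible precisely when $1/3\le|A|\le\sqrt d$, which I would verify for the minimizing choice at the end.

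For $A$ itself I would exploit the reflection symmetry of $\CC^d$ about each hyperplane $x_i=\tfrac12$ and take $A$ to be a closed Euclidean ball centered at the midpoint $\mathbf m=(\tfrac12,\dots,\tfrac12)$, so that $|A|=2r$. The radius $r$ is chosen to equal the distance from $\mathbf m$ to a corner of some level-$k$ basic cube, so that the sphere $\partial A$ passes exactly through that corner; these are the only \emph{critical radii} at which the family of cubes enclosed by $A$ can change, hence the only radii worth testing.

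The heart of the argument is a rigorous lower bound on $\mu(A)$, and here it is essential that $\mu$ be computable without prior knowledge of $\CH^{s_d}(\CC^d)$. For a self-similar set satisfying the strong open set condition, the normalized $s_d$-Hausdorff measure $\mu$ coincides with the natural self-similar measure, which assigns mass exactly $2^{-dk}$ to each of the $2^{dk}$ level-$k$ basic cubes $S_{i_1}\circ\cdots\circ S_{i_k}(\CC^d)$: indeed, for a word $w$ of length $k$ the map $S_w$ has ratio $3^{-k}$, so $\CH^{s_d}(S_w(E))=3^{-ks_d}\CH^{s_d}(E)=2^{-dk}\CH^{s_d}(E)$, and the pieces are disjoint by the strong open set condition. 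Consequently, if $N$ is the number of level-$k$ cubes \emph{entirely contained} in $A$, then $\mu(A)\ge N\cdot 2^{-dk}$, whence $\CH^{s_d}(\CC^d)\le (2r)^{s_d}\,2^{dk}/N$. To obtain $N$ it suffices to count, in the single orthant $\{x_i\le\tfrac12\}$, those cubes whose corner nearest the origin (equivalently, its corner \emph{farthest} from $\mathbf m$) lies within distance $r$ of $\mathbf m$ — for such a cube every point is within $r$ of $\mathbf m$, so the cube lies in $A$ — and then to multiply by $2^d$ using the reflection symmetry. This is exactly the lattice count at the core of Algorithm~\ref{algo::upper-bdd}, realized on the left-half lattice $S^d$ with $S\approx\CC_L$.

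Finally I would run the computation for each $d\in\{2,\dots,8\}$ at the largest feasible depth $k$, sweep over the critical radii, and report the smallest value of $(2r)^{s_d}2^{dk}/N$ obtained; these minima are the stated bounds. The main obstacle is the \emph{rigor of the count} $N$: each cube must be \emph{certified} to lie inside the closed ball rather than merely appear to, so the comparisons $\|\mathbf x-\mathbf m\|^2\le r^2$ must be performed in exact rational arithmetic (the \texttt{Fraction} class) to exclude any false inclusion from floating-point error, since a single spuriously counted boundary cube would break the inequality $\mu(A)\ge N\,2^{-dk}$ and thereby invalidate the bound. The secondary difficulty is purely computational: the lattice has $\CO(2^{dk})$ points, so time and memory force the attainable depth $k$ to shrink as $d$ grows, which is why the resulting bound becomes looser in higher dimensions.
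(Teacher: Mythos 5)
Your proposal is correct and takes essentially the same route as the paper: it is in substance Algorithm~\ref{algo::upper-bdd}, namely using the minimum in Theorem~\ref{thm::cvx-set} one-sidedly with a ball centered at $(\tfrac12,\dots,\tfrac12)$ at the critical radii touching corners of level-$k$ basic cubes, lower-bounding $\mu(A)$ by an exact-rational (\texttt{Fraction}) count of fully enclosed cubes, and minimizing $|A|^{s_d}/\mu(A)$ over radii at the largest feasible depth. If anything, you make explicit several steps the paper leaves implicit, such as the identification of $\mu$ with the self-similar measure giving each level-$k$ cube mass $2^{-dk}$, the admissibility check $c\le|A|\le|E|$, and the farthest-corner criterion certifying containment of a cube in the ball.
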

	
	These computations of upper bound can be shown with respect to the na\"ive upper bound:
	
	\begin{figure}[H]
		\begin{center}
			\begin{tikzpicture}[x=1.5cm, y=0.2cm]
				\draw[->, line width=0.5pt] (0,0) -- (8,0) node[right] {$d$};
				\draw[->, line width=0.5pt] (0,0) -- (0,32) node[above] {$y$};
				
				\draw[color=blue, variable=\x, samples=100, domain=1:6, dashed] plot ({\x}, {\x^( (\x * ln(2)/ln(3)) / 2 )});
				\footnotesize
				\draw[fill=blue] (1,1) circle (1pt);
				\draw[fill=blue] (2,1.54856265263) circle (1pt);
				\draw[fill=blue] (3,2.82842712475) circle (1pt);
				\draw[fill=blue] (4,5.75062600477) circle (1pt);
				\draw[fill=blue] (5,12.6620035654) circle (1pt);
				\draw[fill=blue] (6,29.7081993809) circle (1pt);
				\draw[fill=red, dashed] (1,1) circle (1pt) -- (2, 1.500886049123709) circle (1pt) -- (3,2.352741546983966) circle (1pt)  -- (4,4.089697707421688) circle (1pt)  -- (5,7.502183963990683) circle (1pt)  -- (6,14.810000552236708) circle (1pt)  -- (7,31.501011683100224) circle (1pt);
				
				\draw[fill=black] (1,0) circle (1pt) node[below] {1};
				\draw[fill=black] (2,0) circle (1pt) node[below] {2};
				\draw[fill=black] (3,0) circle (1pt) node[below] {3};
				\draw[fill=black] (4,0) circle (1pt) node[below] {4};
				\draw[fill=black] (5,0) circle (1pt) node[below] {5};
				\draw[fill=black] (6,0) circle (1pt) node[below] {6};
				\draw[fill=black] (7,0) circle (1pt) node[below] {7};
				
				\draw[fill=black] (0,5) circle (1pt) node[left] {5};
				\draw[fill=black] (0,10) circle (1pt) node[left] {10};
				\draw[fill=black] (0,15) circle (1pt) node[left] {15};
				\draw[fill=black] (0,20) circle (1pt) node[left] {20};
				\draw[fill=black] (0,25) circle (1pt) node[left] {25};
				\draw[fill=black] (0,30) circle (1pt) node[left] {30};
			\end{tikzpicture}
		\end{center}
		\caption{Computed Improvements on the Upper Bound (\textcolor{red}{red}) compared to the Na\"ive Upper Bound (\textcolor{blue}{blue}).}
	\end{figure}
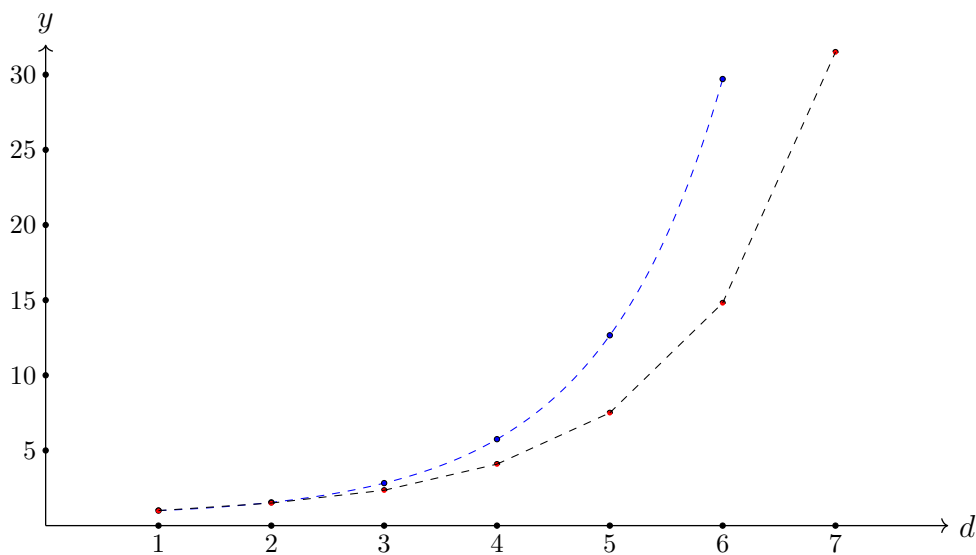
	
	
	\section{Foundations to the Lower Bounds}
	
	\begin{remark}
		With the definition of Quarants and enumeration of the IFS, we can enumerate the basic sets of the $d$-th dimensional and level $1$ Cantor set by inputting $D:=[0,1]^d$ into the $S_i$ function for $i=1,2,\cdots,2^d$.\\

        Additionally, for a string of length $k$, $(i_1,i_2,\cdots, i_k)\in\{1,2,\cdots, 2^d\}^k$, we consider the contraction as the composition of contractions, \ie:
		\begin{equation*}
			S_{i_1,i_2,\cdots, i_k} := S_{i_1}\circ S_{i_2}\circ\cdots\circ S_{i_k}
		\end{equation*}
		Since our main concern is the contractor of the set that we start from $D:=[0,1]^d$, we denote:
		\begin{equation*}
			D_{i_1,i_2,\cdots, i_k} := S_{i_1,i_2,\cdots, i_k}(D).
		\end{equation*}
	\end{remark}

	By definition, $\mu$ is the probability measure so we have $\mu(B)\leq 1$ naturally. Therefore, if we would want to find some tighter lower bound, we can improve by computing a better lower bound on the diameter of the optimal set.\\
	
    By definition, we can develop the following lemma.
    
	\begin{lemma}
		  An optimal set $B\subset\R^d$ must intersect at least two basic sets on level 1, \ie, $B$ intersect at least two sets from $\{D_1,D_2,\cdots,D_{2^d}\}$, thus $|B|\ge1/3=\min\{d(S_i(D),S_j(D):i\ne j\}$.
	\end{lemma}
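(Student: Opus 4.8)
The plan is to argue by contradiction, exploiting the self-similarity of the natural measure on $\CC^d$ together with the scaling identity $3^{s_d}=3^{d\log_3 2}=2^{d}$. Recall that $\mu$ is supported on $\CC^d$ and is self-similar, assigning each level-$1$ piece equal mass, so that $\mu(S_i(F))=2^{-d}\mu(F)$ for every $F\subseteq\CC^d$. I would begin with a harmless normalization: replacing $B$ by $\mathrm{conv}(B\cap\CC^d)$ leaves $B\cap\CC^d$ (hence $\mu(B)$) unchanged and can only decrease $|B|$, so since $B$ is optimal we may assume $B=\mathrm{conv}(B\cap\CC^d)$. In particular $B\subseteq\mathrm{conv}(\CC^d)=[0,1]^d$, and $B\cap\CC^d\neq\emptyset$, since otherwise $\mu(B)=0$ and the ratio in (\ref{eq::convexinf}) is infinite, contradicting optimality.

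Now suppose toward a contradiction that $B$ meets only one basic cube, i.e. all points of $B\cap\CC^d$ lie in a single $D_i=S_i([0,1]^d)$. As the $D_j$ are pairwise disjoint and $B$ is convex, this forces $B\subseteq D_i$ and $B\cap\CC^d\subseteq S_i(\CC^d)$. I would then consider the rescaled set $B':=S_i^{-1}(B)$. Since $S_i^{-1}$ is a similarity of ratio $3$, $B'$ is convex with $|B'|=3|B|$, and one checks $B'\cap\CC^d=S_i^{-1}(B\cap\CC^d)$ using $S_i(\CC^d)\subseteq\CC^d$ and the injectivity of $S_i$. Self-similarity of $\mu$ then gives $\mu(B)=\mu(S_i(B'\cap\CC^d))=2^{-d}\mu(B')$, so $\mu(B')=2^{d}\mu(B)$. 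Combining this with $|B'|=3|B|$ and $3^{s_d}=2^{d}$,
\[
\frac{|B'|^{s_d}}{\mu(B')}=\frac{3^{s_d}\,|B|^{s_d}}{2^{d}\,\mu(B)}=\frac{|B|^{s_d}}{\mu(B)}=\CH^{s_d}(\CC^d),
\]
so $B'$ is again optimal, but with strictly larger diameter. Moreover $B\subseteq D_i$ yields $|B|\le\tfrac13\,|[0,1]^d|=\tfrac13\sqrt d$, hence $|B'|=3|B|\le\sqrt d=|\CC^d|$, so $B'$ remains admissible for (\ref{eq::convexinf}).

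The contradiction then comes from iterating: as long as the current optimal set lies inside a single basic cube, applying the inverse of the corresponding $S$ triples its diameter while preserving both optimality and admissibility. After $k$ such steps the diameter equals $3^{k}|B|\ge 3^{k}c=3^{k-1}$, using the admissibility bound $|B|\ge c=1/3$. But any set contained in a single basic cube has diameter at most $\tfrac13\sqrt d$, so once $3^{k-1}>\tfrac13\sqrt d$ the iterate can no longer fit in one cube and must meet at least two distinct basic cubes $D_i,D_j$. This produces an optimal set meeting two basic sets, establishing the structural claim. Finally, any set meeting distinct $D_i,D_j$ contains points $p\in D_i$ and $q\in D_j$ with $|p-q|\ge d(D_i,D_j)\ge\min_{i\neq j}d(D_i,D_j)=1/3$, so its diameter is at least $1/3$, as asserted.

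The hard part will be the admissibility bookkeeping rather than any single computation. Specifically, the convex-hull normalization must be justified carefully so that containment in one cube genuinely gives the diameter bound $|B|\le\tfrac13\sqrt d$, and the termination of the tripling process relies on the diameters being capped by $|\CC^d|=\sqrt d$. The measure identity $\mu(B')=2^{d}\mu(B)$ also rests on the somewhat delicate set equality $B'\cap\CC^d=S_i^{-1}(B\cap\CC^d)$, which is where the self-similarity of both $\CC^d$ and $\mu$ is really used.
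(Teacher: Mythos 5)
Your proposal is correct and is essentially the paper's own argument: the paper's one-sentence proof cites Theorem \ref{thm::cvx-set} (whose minimum is taken over convex sets with $c\le|A|\le|E|$, where $c=1/3$ for this system) to get the diameter bound, and sketches exactly your blow-up step as ``scaling the one intersected basic square to the whole domain.'' Your write-up simply makes that sketch precise --- the convex-hull normalization, the identity $\mu(B')=2^{d}\mu(B)$, ratio preservation via $3^{s_d}=2^{d}$, and termination of the tripling --- with the same (harmless, and shared by the paper's sketch) caveat that the iteration produces \emph{some} optimal set meeting two level-1 basic cubes rather than verifying the property for every optimal set.
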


    This is directly by \ref{thm::cvx-set}, while one can prove this otherwise by scaling the one intersected basic square to the whole domain.
    
	\begin{theorem}\label{thm::5/9diam}
		Suppose $B\subset\R^d$ is the optimal set of $\CC^d$, $|B| > \frac{5}{9}$.
	\end{theorem}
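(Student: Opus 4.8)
The plan is to argue by contradiction: assume $B$ is optimal with $|B|\le 5/9$ and force a contradiction with optimality through an upper bound on $\mu(B)$ dictated by the small diameter. The engine is the clean reduction $\mu(B)=\mu(B\cap\CC^d)\le M(|B|)$, where $M(L):=\sup\{\mu(F):F\subseteq\CC^d,\ \diam F\le L\}$; this holds because $B\cap\CC^d$ is a subset of $\CC^d$ of diameter at most $\diam B=|B|$, and it lets me discard convexity and work purely with the metric structure of $\CC^d$. Bounding $M(L)$ for $L\le 5/9$ is the whole game.

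For the coordinate analysis, fix $F\subseteq\CC^d$ with $\diam F\le 5/9$ and let $[a_k,b_k]$ be its projection onto the $k$-th axis, so $b_k-a_k\le 5/9$. Writing $\nu$ for the one-dimensional Cantor measure and $g(\ell):=\max_a \nu([a,a+\ell])$, an interval of length below $7/9$ meets at most two of the four level-$2$ pieces of $\CC$, so $g(\ell)\le 1/2$ for every $\ell\le 5/9$; hence $\mu(F)\le\prod_k\nu([a_k,b_k])\le 2^{-d}$. The preceding lemma forces $F$ to meet at least two level-$1$ cubes, so in some coordinate, say the first, the projection spans the central gap, $a_1\le 1/3\le 2/3\le b_1$. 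Such an interval of length $\le 5/9$ can contain both inner level-$2$ pieces $[2/9,1/3]$ and $[2/3,7/9]$ only if it is exactly $[2/9,7/9]$, in which case the endpoints $2/9,7/9$ exhaust the entire diameter budget in the first coordinate and force every other coordinate to collapse to a point; away from this borderline the first coordinate contributes at most $1/4$, giving $M(5/9)\le 2^{-(d+1)}$.

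The decisive and hardest step is to sharpen this product bound using the joint constraint that all pairwise squared distances in $F$ are $\le|B|^2$, since the per-coordinate estimate is too lossy once $d\ge 4$. The cleanest rigorous handle is at the coarsest scale: two points of $F$ whose level-$1$ addresses (the cubes in $\{0,1\}^d$ containing them) differ in $m$ coordinates lie at distance $\ge\sqrt m/3$, because distinct level-$1$ pieces of $\CC$ are $1/3$ apart; with $\diam F\le 5/9$ this forces $\sqrt m/3\le 5/9$, i.e. $m\le 2$, so all level-$1$ addresses occurring in $F$ agree in all but at most two coordinates. I would then propagate this restriction to finer scales, tracking level by level how the budget $\sum_k(x_k-y_k)^2\le(5/9)^2$ is consumed each time $F$ branches in a new coordinate, to obtain a bound on $M(5/9)$, and more finely on $M(L)$ for $L\in[1/3,5/9]$, that improves on the crude $2^{-(d+1)}$. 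With such a bound, Theorem~\ref{thm::cvx-set} and optimality give $\CH^{s_d}(\CC^d)=|B|^{s_d}/\mu(B)\ge L^{s_d}/M(L)$, and it remains to verify $L^{s_d}/M(L)>U_d$ on $[1/3,5/9]$, where $U_d$ is the explicit upper bound of Theorem~\ref{thm::upperbound}; this contradicts $\CH^{s_d}(\CC^d)\le U_d$. The two regimes behave very differently: for large $d$ the ratios blow up and the inequality is comfortable, whereas the genuinely tight cases are the small $d$ near $L=5/9$, where the refined $M(L)$ is exactly what is needed. The main obstacle is therefore this third step: converting the metric coupling into a bound on $M(L)$ sharp enough to clear $U_d$ uniformly in $d$.
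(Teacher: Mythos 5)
Your overall strategy is genuinely different from the paper's, and unfortunately it breaks at exactly the step you identify as decisive, as well as at the step you present as already done. The dichotomy in your second paragraph is false: you do not need the projection to contain \emph{both} inner level-$2$ pieces to beat $1/4$. For instance the interval $[2/9,19/27]$ has length $13/27<5/9$, spans the central gap, and has Cantor measure $\nu([2/9,1/3])+\nu([2/3,19/27])=\tfrac14+\tfrac18=\tfrac38$; more generally, intervals approaching $[2/9,7/9]$ have measure approaching (indeed equal to) $\tfrac12$, so there is no clean ``borderline versus $\le 1/4$'' split. The auxiliary inference is also invalid: if the first-coordinate projection has diameter $5/9$, only the \emph{extremal pair} of points is forced to agree in the other coordinates, not the whole set, so the other coordinates do not ``collapse to a point.'' And as literally stated, $M(5/9)\le 2^{-(d+1)}$ is false: for $d=2$ the single level-$1$ square $F=\CC^2\cap[0,1/3]^2$ has $\diam F=\sqrt2/3<5/9$ and $\mu(F)=1/4$ (your appeal to the two-cube lemma applies to optimal sets, not to arbitrary $F$ in the definition of $M$). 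The third step, propagating the $m\le 2$ address constraint to finer scales, is a plan rather than a proof, and its endgame is structurally problematic: it needs the computed bounds $U_d$ of Theorem \ref{thm::upperbound}, which exist only for $d\le 8$, while the theorem is stated for all $d$; worse, your asymptotic claim is backwards, since $U_d$ grows super-exponentially (like the na\"ive bound $d^{d\log_3(2)/2}$) whereas any estimate of the shape $L^{s_d}/M(L)$ with $M(L)\ge c\,2^{-d}$ and $L\in[1/3,5/9]$ grows at most exponentially, so large $d$ is where the contradiction machine fails, not where it is comfortable.

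The paper avoids all of this with a soft self-similarity argument that needs no measure estimates and no numerics. Suppose $B$ meets $D_1$ and $D_2$ but avoids the far level-$2$ blocks of each (those adjacent to the other cube's far side). Then one maps the surviving level-$2$ trace of $B$ up by the factor-$3$ similarities, $D_{1,y}\mapsto D_y$ and $D_{2,x}\mapsto D_x$, producing a set $T$ with $\mu(T)=2^d\mu(B)$ and, after checking that the translations make the expansion strictly sub-$3$ on diameters, $|T|<3|B|$. Since $3^{s_d}=2^d$, this gives $|T|^{s_d}/\mu(T)<|B|^{s_d}/\mu(B)$, contradicting optimality via Theorem \ref{thm::cvx-set}. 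Hence $B$ must join $D_1$ to a far level-$2$ block of $D_2$ (or symmetrically), which already forces a coordinate gap of $8/9-1/3=5/9$, i.e.\ $|B|\ge 5/9$, uniformly in $d$. If you want to salvage your density approach, note that this rescaling trick is precisely the mechanism that replaces your unproven bound on $M(L)$: instead of estimating how much mass fits in a small-diameter set, it shows a small-diameter optimal set could be improved, which is a strictly weaker (and provable) statement.
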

	
	\begin{proof}
		Suppose that $B$ intersects 2 fundamental sets in level $1$, without loss of generality, we may assume that $B$ intersects $D_1$ and $D_2$. For the sake of contradiction, we suppose that $B\cap\big((\bigcup_{x:\text{odd}}D_{1,x})\cup(\bigcup_{y:\text{even}}D_{2,y})\big) = \emptyset$.\\
		Then, we construct a scaling and translation map $\varphi$ such that for all odd $x$ and even $y$, it maps:
		\begin{equation*}
			D_{1,y}\mapsto D_y \mbox{ and } D_{2,x}\mapsto D_x.
		\end{equation*}
        Concretely, we can define the map $\varphi:D_{1}\cup D_2\to \CC^d$ as follows:
        \begin{equation*}
            \varphi(x) = \begin{cases}
                3x, & \mbox{ when } x\in D_1,\\
                3\big(x - (\frac{2}{3},0,0,\cdots, 0)\big), & \mbox{ when } x\in D_2.
            \end{cases}
        \end{equation*}
		Then, we consider sets $T_1,T_2,\cdots, T_{2^d}$ constructed by:
		\begin{equation*}
			T_j = \begin{cases}
				\varphi(D_{1,j}\cap B) \mbox{ when } j \mbox{ is even},\\
				\varphi(D_{2,j}\cap B) \mbox{ when } j \mbox{ is odd}.
			\end{cases}
		\end{equation*}
		Now, denote $T:= \bigcup_{i=1}^{2^d}T_i$. Note that the scaling is exactly by 3 and is the exact duplicate of the self similar sets, we have:
		\begin{equation*}
			\mu(T) = 2^d\mu(B).
		\end{equation*}
		Now, we consider that the scaling factor is by $3$, so we have:
		\begin{equation*}
			|T| \leq 3|B|.
		\end{equation*}
		However, we shall not be satisfactory with the inequality. We need a strict inequality. We observe that for the $\varphi$ map, if we break into each dimension, they:
		\begin{itemize}
			\item Either has the mapping with a negative translation factor of $-\frac{2}{3}$ and $-\frac{4}{3}$, or
			\item has the maximum possible distance in that that dimension smaller than $\frac{1}{3}$.
		\end{itemize}
		Hence, either way, the diameter of the shape after the transition would be strictly bounded by 3 times of the original diameter. Thus, $|T| < 3|B|$.\\
		Now, if we use $T$ as as the optimal set in equation \ref{eq::convexinf}:
		\begin{equation*}
			\frac{|T|^s}{\mu(T)} < \frac{3^s|B|^s}{\mu(T)} = \frac{2^d|B|^s}{2^d\mu(B)} = \frac{|B|^s}{\mu(B)},
		\end{equation*}
		which implies that $T$ is smaller than $B$, and thus $B$ cannot be the optimal set, which contradicts with our assumption that $B$ is optimal set.\\
		Therefore, $B$ must intersect at least one of the sets on the corner, specifically, $B\cap\big((\bigcup_{x:\text{odd}}D_{1,x})\cup(\bigcup_{y:\text{even}}D_{2,y})\big) \neq \emptyset$, and since it already intersects $D_1$ and $D_2$, so it must at least intersect some $D_{2,y}$ with $D_1$ or some $D_{1,x}$ with $D_2$. Thus, such distance is guaranteed to be $\frac{5}{9}$.
	\end{proof}
	
	For the better approximation procedure, we shall consider the repulsive lemma and repulsive pairs.
	
	\begin{definition}[Repulsive Pairs]
		$\{U, V\}$ is defined to be a repulsive pair of order $k$ with distance $d$ if $U$ and $V$ are basic sets of level $k$ for $\CC^d$ and $d_{\rm H}(U,V) > d$.
	\end{definition}
	
	\begin{lemma}[Repulsive Lemma]
		Suppose $\{U,V\}$ is a repulsive pair of order $k$ with distance $|B|$, then:
		\begin{equation*}
			\mu\big(B\cap(U\cup V)\big)\leq 2^{-dk}.
		\end{equation*}
	\end{lemma}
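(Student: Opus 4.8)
The plan is to combine two ingredients: an exact value for the $\mu$-mass of a single level-$k$ basic set, and a diameter obstruction showing that a set of diameter $|B|$ cannot reach into both members of a repulsive pair. Neither piece is deep, so the proof will be short; the only thing to be careful about is the distance convention in the definition of a repulsive pair.

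First I would compute the mass of a level-$k$ cell. The measure $\mu$ is the self-similar probability measure on $\CC^d$, characterized by the relation $\mu(S_i(A)) = 2^{-d}\mu(A)$ for each of the $2^d$ maps; equivalently it is $\CH^s$ restricted to $\CC^d$ and normalized. Writing a level-$k$ basic set as $U = D_{i_1,\dots,i_k} = S_{i_1,\dots,i_k}(D)$, so that $U\cap\CC^d = S_{i_1,\dots,i_k}(\CC^d)$ is a copy of $\CC^d$ scaled by $3^{-k}$, the scaling law $\CH^s(rX)=r^s\CH^s(X)$ gives $\CH^s(U\cap\CC^d)=(3^{-k})^s\CH^s(\CC^d)$. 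Since $s=s_d=d\log_3 2$ we have $(3^{-k})^{s}=3^{-kd\log_3 2}=2^{-kd}$, so after normalizing every level-$k$ basic set carries $\mu$-mass exactly $2^{-dk}$; in particular $\mu(U)=\mu(V)=2^{-dk}$.

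The key step is the geometric observation that $B$ cannot meet both $U$ and $V$. The repulsive hypothesis states that the two level-$k$ cells are separated by more than $|B|$ in the infimal (gap) distance $d(U,V)=\inf\{d(u,v):u\in U,\ v\in V\}$ introduced alongside the strong open set condition. If $B$ met both cells, choosing $u\in B\cap U$ and $v\in B\cap V$ would force $d(U,V)\le d(u,v)\le|B|$, contradicting $d(U,V)>|B|$. Hence $B$ is disjoint from at least one of $U,V$; say $B\cap V=\emptyset$. Then $B\cap(U\cup V)\subseteq U$, and monotonicity of $\mu$ yields
\[ \mu\big(B\cap(U\cup V)\big)\le\mu(U)=2^{-dk}, \]
which is the claim.

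The step I expect to require the most care is not the estimate but fixing the distance convention: the bound is genuinely false if the separation is read in the Hausdorff metric rather than as the gap distance $d(U,V)$. Two cells can be Hausdorff-far while sharing a boundary, and since an optimal set satisfies $|B|>5/9>3^{-k}$ by Theorem~\ref{thm::5/9diam} it would then meet both, inflating the mass to $2\cdot2^{-dk}$. So the plan hinges on interpreting ``distance $|B|$'' in the definition of a repulsive pair as the infimal separation $d(U,V)$, after which the diameter argument above is immediate.
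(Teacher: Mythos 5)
Your first ingredient is fine ($\mu$ of a level-$k$ cell is exactly $2^{-dk}$, by the scaling $3^{-ks}=2^{-dk}$), but the core geometric step rests on a reinterpretation of the hypothesis that the paper cannot afford, and your diagnosis of the Hausdorff-distance reading is backwards. The definition of a repulsive pair really does use the Hausdorff distance $d_{\rm H}$, and the applications depend on it: in Section 4 the lemma is invoked for pairs such as $\{D_{1,1},D_{2,7}\}$ with $d_{\rm H}=2\sqrt{11}/9\approx 0.737$, whereas the gap distance of that pair is only $\sqrt{1^2+1^2+5^2}/9=\sqrt{3}/3\approx 0.577$; the argument there must handle every $|B|<2\sqrt{11}/9$, so for $|B|\in\big(\sqrt{3}/3,\,2\sqrt{11}/9\big)$ your gap-distance version of the lemma simply does not apply (the pairs are not ``repulsive'' in your sense), and the lower-bound computations downstream would collapse. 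So you have proved a genuinely weaker lemma than the one stated, and your escape route --- declaring the Hausdorff reading false --- is the actual gap.

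The Hausdorff version is true, and the failure mode you describe ($B$ meets both cells and captures mass $2\cdot 2^{-dk}$) cannot occur. Since $U=S_I(D)$ and $V=S_J(D)$ have the same contraction ratio, $V=U+t$ with $|t|=d_{\rm H}(U,V)>|B|$ (for convex translates the Hausdorff distance equals the translation norm), and moreover $V\cap\CC^d=(U\cap\CC^d)+t$. Now $B$ may well meet both $U$ and $V$, but it can never contain both $x$ and $x+t$, as that would force $|B|\geq|t|$. This is exactly what the paper's proof exploits (admittedly in garbled form: its displayed formula is an incomplete sentence, and the words should be $S_{I,L},S_{J,L}$ rather than $S_{L,I},S_{L,J}$): subdivide $U$ and $V$ into the $2^{dm}$ pairs of corresponding level-$(k+m)$ sub-cells $S_{I,L}(D)$ and $S_{J,L}(D)=S_{I,L}(D)+t$; once $m$ is large enough that $|t|-2\sqrt{d}\,3^{-(k+m)}>|B|$, the gap between the two members of each pair exceeds $|B|$, so $B$ meets at most one member of each pair, whence $\mu\big(B\cap(U\cup V)\big)\leq 2^{dm}\cdot 2^{-d(k+m)}=2^{-dk}$. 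Equivalently, setting $A_1=B\cap U\cap\CC^d$ and $A_2=(B\cap V\cap\CC^d)-t\subset U\cap\CC^d$, the sets $A_1,A_2$ are disjoint and $\mu$ translates correctly between the two copies, so $\mu(B\cap U)+\mu(B\cap V)\leq\mu(U)=2^{-dk}$: a set of diameter below $d_{\rm H}(U,V)$ can capture at most \emph{half} the combined mass, never all of it. Your proposal is missing precisely this subdivision (or pairing) idea, which is the entire content of the lemma beyond the trivial cell-mass computation.
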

	
	\begin{proof}
		Suppose $U = S_I(D)$ and $V = S_J(D)$, where $I,J\in\{1,2,\cdots, 2^d\}^k$. Since $|B| < d_{\rm H}(U,V)$, there exists $m\in \ZZ^+$ such that for all $L\in\{1,2,\cdots, 2^d\}^m$:
		\begin{equation*}
			d_{\rm H}\big(B\cap \big(S_{L,I}(D) \cup S_{L,J}(D)\big)\big).
		\end{equation*}
		Hence, by the monotonicity of the measure:
		\begin{align*}
			\mu\big(B\cap (U\cup V)\big) & \leq \mu\left(\bigcup_{L\in\{1,2,\cdots, 2^d\}^m}\big(B\cap \big(S_{L,I}(D) \cup S_{L,J}(D)\big)\big)\right)\\
			& = (2^d)^m\cdot 2^{-d(k+m)} = 2^{-dk}.
		\end{align*}
	\end{proof}
	
	Now, we can use this notation to generalize a tighter lower bound by using the following proposition.
	
	\begin{proposition}\label{thm::upbd-prob-msre}
		Suppose $B\subset \R^d$ is an optimal set of $\CC^d$ such that $|B|\leq L$, then:
		\begin{equation}
			\mu(B) \leq 1 - N_k(L)2^{d-kd-1}, \label{eq::upbd-prob-msre}
		\end{equation}
		where $N_k(L)$ is the maximum number of matching (vertices that do not share a common edge) over all edges that connect between:
		\begin{align*}
			I\in& X_k = \{1,i_2,\cdots, i_k: i_t = 1,2,\cdots, 2^d \mbox{ for } 2\leq t\leq k\},\\
			J\in& X_k = \{2^d,i_2,\cdots, i_k: i_t = 1,2,\cdots, 2^d \mbox{ for } 2\leq t\leq k\},
		\end{align*}
		in which $|S_I(0) - S_J(0)| > L$, where $0:=(0,\cdots,0)\in\R^d$.
	\end{proposition}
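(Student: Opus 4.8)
The plan is to combine three ingredients: the self-similarity of the measure $\mu$, the Repulsive Lemma, and the reflection symmetry of $\CC^d$. First I would record that, as the normalized $s$-dimensional Hausdorff measure on $\CC^d$, the measure $\mu$ is self-similar, so every level-$k$ basic set has equal mass and
\[\mu(D_I) = 2^{-dk}\quad\text{for each }I\in\{1,\dots,2^d\}^k,\qquad \sum_{|I|=k}\mu(D_I)=1.\]
Writing $\mu(B)=\sum_{|I|=k}\mu(B\cap D_I)$, the whole proof reduces to showing that $B$ must \emph{under-capture} the mass of sufficiently many disjoint pairs of cells.

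Next I would reinterpret the bipartite graph of the statement through the Repulsive Lemma. Its edges join a sub-cell $D_I\subset D_1$ to a sub-cell $D_J\subset D_{2^d}$ exactly when the anchor corners satisfy $|S_I(0)-S_J(0)|>L\ge|B|$. For any such edge the two cells are separated by more than $\diam(B)=|B|$, so $B$ cannot meet both at once (points $p\in B\cap D_I$ and $q\in B\cap D_J$ would force $|p-q|\le|B|$); the Repulsive Lemma then gives $\mu\bigl(B\cap(D_I\cup D_J)\bigr)\le 2^{-dk}$. Taking a maximum matching yields $N_k(L)$ disjoint pairs inside $\{D_1,D_{2^d}\}$ on each of which $B$ captures at most half of the available mass $2\cdot 2^{-dk}$.

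The decisive step is to account for the factor $2^{d-1}$, which comes from symmetry. The unit cube has exactly $2^{d-1}$ antipodal pairs of corner blocks $\{D_a,D_{a'}\}$, indexed by complementary subsets $\{T,T^c\}\subseteq\{1,\dots,d\}$, and for each such pair a suitable composition of coordinate reflections $x_i\mapsto 1-x_i$ maps $\{D_1,D_{2^d}\}$ onto $\{D_a,D_{a'}\}$. These reflections are isometries preserving $\mu$ and mapping level-$k$ cells to level-$k$ cells, so they carry the bipartite graph of one antipodal pair isomorphically onto that of another; hence every antipodal pair realizes a matching of the \emph{same} size $N_k(L)$ under the identical separation condition. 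Because distinct corner blocks are disjoint, the matchings from the $2^{d-1}$ antipodal pairs together furnish $2^{d-1}N_k(L)$ repulsive pairs of level-$k$ cells that are mutually disjoint.

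Finally I would add up the deficits. Each of these $2^{d-1}N_k(L)$ disjoint pairs contributes at most $2^{-dk}$ to $\mu(B)$ rather than the full $2\cdot 2^{-dk}$, while every remaining cell contributes at most $2^{-dk}$; collecting terms gives
\[\mu(B)\;\le\;1-2^{d-1}N_k(L)\,2^{-dk}\;=\;1-N_k(L)\,2^{\,d-kd-1},\]
as claimed. I expect the main obstacle to be making the symmetry step rigorous: one must check that every antipodal block pair really attains the same matching number $N_k(L)$ (through the reflection isomorphisms) and that the pairs selected across all $2^{d-1}$ antipodal pairs remain globally disjoint, so that the per-pair deficits add without overlap. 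A secondary technical point, which I would verify carefully, is that the anchor-corner inequality $|S_I(0)-S_J(0)|>L$ indeed certifies the genuine inter-cell separation required to invoke the Repulsive Lemma against a set of diameter $|B|\le L$.
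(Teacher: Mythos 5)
Your proposal is correct and takes essentially the same route as the paper's own proof: disjoint repulsive pairs obtained from a maximum matching between level-$k$ subcells of the antipodal corner blocks, a deficit of $2^{-dk}$ per matched pair via the Repulsive Lemma, and the factor $2^{d-1}$ from reflection symmetry over the antipodal quadrant pairs, with your write-up merely making the paper's brief symmetry appeal explicit. One caution: your parenthetical claim that $B$ literally cannot meet both cells of an edge does not follow from the anchor-corner inequality $|S_I(0)-S_J(0)|>L$ alone (the cell-to-cell distance can be smaller than the corner distance), but this is immaterial because the Repulsive Lemma you invoke supplies exactly the needed bound $\mu\bigl(B\cap(D_I\cup D_J)\bigr)\le 2^{-dk}$ by descending to deeper levels where corresponding subcells inherit the same corner separation while their diameters shrink.
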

	
	\begin{proof}
		Consider that there is a total of $N_k(L)$ disjoint repulsive pairs of order $k$ for $[0,\frac{1}{2}]^d\cup[\frac{1}{2},1]^d$ whose Hausdorff distances are bounded below by $L$, and note that the diameter of $B$ is bounded above by $L$ so the optimal set covers at most one of those fundamental sets there, hence:
		\begin{equation*}
			\mu\left(B\cap \left(\left[0,\frac{1}{2}\right]^d\cup\left[\frac{1}{2},1\right]^d \right)\right) \leq \frac{1}{2^{d-1}} - \frac{N_k(L)}{2^{kd}}.
		\end{equation*}
		Now, by symmetry, there will be a total of $2^{d-1}$ such pairs of quadrants, so we have:
		\begin{equation*}
			\mu(B) \leq 1 - 2^{d-1}\cdot\frac{N_k(L)}{2^{kd}} = 1 - N_k(L)2^{d-kd-1}. \ms
		\end{equation*}
	\end{proof}
	
	\begin{remark}
		The computation of $N_k(L)$ is rather costly, especially when we have to deal with different dimensions and different thresholds. However, we may use a bound of $N_k(L)$ as perfect matching or no matching at all:
		\begin{equation*}
			0 \leq N_k(L) \leq 2^{(k-1)d},
		\end{equation*}
		and by plugging it in equation \ref{eq::upbd-prob-msre}, we obtain that:
		\begin{equation*}
			\mu(B) \leq 1,
		\end{equation*}
		which aligns with the maximum possible probability measure.
	\end{remark}
	
	Furthermore, we will use Proposition \ref{thm::upbd-prob-msre} to define a new function.
	
	\begin{definition}
		Let $k\in\Z^+$ and $d\in\Z^+$ be fixed, we define the function $W_{d,k}:\R^+\to\R$ as:
		\begin{equation*}
			W_{d,k}(x) = \big(1-N_k(x)\cdot 2^{d-kd-1}H_d\big)^{1/s_d},
		\end{equation*}
		where $N_k(x)$ is defined as of in Proposition \ref{thm::upbd-prob-msre}, $H_d$ is the upper bound of Hausdorff dimension of $\CC^d$ and $s_d$ is the Hausdorff dimension of $\CC^d$.
	\end{definition}
	
	\begin{proposition}
		Here, we can give some properties of the $W_{d,k}$ function:
		\begin{enumerate}
			\item For fixed $d$ and $k$, $W_{d,k}$ is monotonic, \ie, $W_{d,k}(x) \leq W_{d,k}(y)$ if $x\leq y$.
			\item If $W_{d,k}(c) < c$, then $|B|\neq c$, where $B$ is an optimal set of $\CC^d$.
		\end{enumerate}
	\end{proposition}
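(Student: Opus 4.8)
The plan is to prove the two assertions by separate, largely independent arguments: the monotonicity in (1) is a soft consequence of how the matching count $N_k$ depends on its threshold, while (2) is a contrapositive argument that assembles the optimal-set identity from Theorem \ref{thm::cvx-set}, the probability estimate of Proposition \ref{thm::upbd-prob-msre}, and the numerical upper bound $H_d$.

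For (1), the first step is to show that $N_k$ is non-increasing. By definition $N_k(x)$ is the maximum matching number of the graph on the vertex set $X_k$ whose edges are exactly the admissible pairs $\{I,J\}$ with $|S_I(0)-S_J(0)| > x$. Increasing $x$ can only \emph{delete} edges while leaving the vertex set fixed, and the maximum matching number of a subgraph on the same vertices never exceeds that of the ambient graph; hence $x \le y$ forces $N_k(x) \ge N_k(y)$. The second step records that $2^{d-kd-1}H_d$ is a fixed positive constant, so the map $x \mapsto 1 - N_k(x)\,2^{d-kd-1}H_d$ is non-decreasing on the range where it is nonnegative. Finally, since $s_d > 0$ the map $t \mapsto t^{1/s_d}$ is increasing on $[0,\infty)$, and composing it with the non-decreasing inner map preserves monotonicity, giving $W_{d,k}(x) \le W_{d,k}(y)$ whenever $x \le y$.

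For (2), I would argue the contrapositive: assuming $B$ is an optimal set with $|B| = c$, I show $c \le W_{d,k}(c)$. First apply Proposition \ref{thm::upbd-prob-msre} with $L = c$; the hypothesis $|B| \le L$ holds with equality, and the pairs counted by $N_k(c)$ are by construction repulsive pairs separated by distance exceeding $c = |B|$, so the Repulsive Lemma applies and yields $\mu(B) \le 1 - N_k(c)\,2^{d-kd-1}$. Next, because $B$ attains the minimum in Theorem \ref{thm::cvx-set}, we have the identity $\CH^{s_d}(\CC^d) = |B|^{s_d}/\mu(B)$, that is $c^{s_d} = \mu(B)\,\CH^{s_d}(\CC^d)$. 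Substituting the bound on $\mu(B)$ together with the estimate $\CH^{s_d}(\CC^d) \le H_d$ gives $c^{s_d} \le H_d\big(1 - N_k(c)\,2^{d-kd-1}\big)$; taking $s_d$-th roots collapses the three estimates into the single inequality $c \le W_{d,k}(c)$, which contradicts the hypothesis $W_{d,k}(c) < c$. Therefore no optimal set can have diameter exactly $c$.

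The algebraic rearrangements and the increasing-power-function observations are routine. The main obstacle I anticipate is making the monotonicity of $N_k$ airtight, namely insisting that the maximum matching is always taken over the \emph{same} vertex set $X_k$ as the threshold varies, so that raising $x$ genuinely restricts to a subgraph rather than altering the underlying graph; a secondary technical point is confirming that on the relevant range of diameters (recall $|B| > 5/9$ by Theorem \ref{thm::5/9diam}) the base $1 - N_k(x)\,2^{d-kd-1}H_d$ stays nonnegative, so that the $1/s_d$-power is real-valued and order-preserving there.
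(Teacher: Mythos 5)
Your proposal is correct and takes essentially the same approach as the paper: part (1) is the paper's one-line observation that $N_k(x)\ge N_k(y)$ for $x\le y$ (you supply the fixed-vertex-set subgraph argument, the monotone-composition step, and the nonnegativity caveat that the paper omits), and part (2) is exactly the paper's contradiction chain $\mu(B)\le 1-N_k(c)2^{d-kd-1}<c^{s_d}/H_d\le\mu(B)$ (which the paper prints with leftover $d=2$ constants, $1-2N_k(c)4^{-k}$). Note only that your concluding inequality $c\le\big(H_d(1-N_k(c)2^{d-kd-1})\big)^{1/s_d}$ presumes that $H_d$ multiplies the whole bracket in the definition of $W_{d,k}$ --- this matches the paper's own proof of (2), so you have in effect silently corrected what is evidently a typo in the printed definition $\big(1-N_k(x)2^{d-kd-1}H_d\big)^{1/s_d}$, under which the contradiction would not follow (since $H_d>1$, that literal $W_{d,k}$ is strictly smaller than the quantity your argument bounds $c$ by, so $W_{d,k}(c)<c$ would be compatible with optimality).
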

	
	\begin{proof}
		\begin{enumerate}
			\item The monotonicity is by the fact that $N_k(x) \geq N_k(y)$ when $x \leq y$, since there will be at least that many repulsive pairs when the distances are shorter.
			\item For the sake of contradiction, suppose $W_{d,k}(c) < c$ and $|B| = c$, with Proposition \ref{thm::upbd-prob-msre}, we have:
			\begin{align*}
				\mu(B) \leq 1 - 2N_{k}(c)4^{-k} < \frac{c^s}{H_d} \leq \mu(B),
			\end{align*}
			which is a contradiction.
		\end{enumerate}
	\end{proof}
	
	Note that the $W_{d,k}(x)$ function allows more possibilities in finding stricter bounds for the Hausdorff measure. However, it is very much constrained by the computation capabilities of the computer. When the dimension $d$ and/or the layer $k$ gets larger, it requires exponentially longer time to compute the number of repulsive pairs.
	
	\section{Computation of Lower Bounds}
	
	With a few lower dimensions, however, we may utilize the feature on very low levels to computer better lower bounds on the Hausdorff measure.\\
	
	For dimension 3, recall from Theorem \ref{thm::upperbound}, we have:
	\begin{equation*}
		\CH^{s_3}(\CC^3) \leq 2.352741546983966.
	\end{equation*}
	Also, by Theorem \ref{thm::5/9diam}, let $B$ be the optimal set, we know that $|B| > 5/9$, so we can consequently have:
	\begin{equation*}
		\mu(B) \geq \frac{(5/9)^{3\log_32}}{2.352741546983966} > 0.139716 > \frac{1}{8}.
	\end{equation*}
	Now, we can claim that $|B| \geq\frac{2\sqrt{11}}{9}$, which happens to be the Hausdorff distance between the following eight repulsive pairs or order 2:
	\begin{align*}
		\{D_{1,1}, D_{2,7}\}, \{D_{1,2}, D_{2,8}\}, \{D_{1,3}, D_{2,5}\}, \{D_{1,4}, D_{2,5}\},\\
		\{D_{1,5}, D_{2,4}\}, \{D_{1,6}, D_{2,3}\}, \{D_{1,7}, D_{2,1}\}, \{D_{1,8}, D_{2,2}\}.
	\end{align*}
	That is,
	\begin{equation*}
		d_{\rm H}(D_{1,1},D_{2,7}) = \cdots = d_{\rm H}(D_{1,8},D_{2,2}) = \sqrt{\frac{2^2 + 2^2 + 6^2}{9^2}} = \frac{2\sqrt{11}}{9}.
	\end{equation*}
	Hence, if $|B| < \frac{2\sqrt{11}}{9}$, there will be 8 repulsive pairs and
	\begin{equation*}
		\mu(B)\leq 8\times \frac{1}{64} = \frac{1}{8},
	\end{equation*}
	which contradicts the fact that $\mu(B) > \frac{1}{8}$, implying $|B|\geq\frac{2\sqrt{11}}{9}$.\\
    
	With the updated diameter, we have:
	\begin{equation*}
		\mu(B) \geq \frac{(2\sqrt{11} / 9)^{3\log_32}}{2.352741546983966} > 0.238561 > \frac{12}{64} = \frac{3}{16}.
	\end{equation*}
	Now, we claim that $|B| \geq \frac{2\sqrt{2}}{3}$, which happens to be the Hausdorff distance between the following four repulsive pairs of order 2:
	\begin{equation}
		\{D_{1,1}, D_{2,8}\}, \{D_{1,3}, D_{2,6}\}, \{D_{1,5}, D_{2, 4}\}, \{D_{1,7}, D_{2,2}\}, \label{ex:rep-pairs}
	\end{equation}
	which we have:
	\begin{equation*}
		d_{\rm H}(D_{1,1},D_{2,8}) = \cdots = d_{\rm H}(D_{1,7},D_{2,2}) = \sqrt{\frac{2^2 + 2^2 + 8^2}{9^2}} = \frac{2\sqrt{2}}{3}.
	\end{equation*}
	Hence, if $|B| < \frac{2\sqrt{2}}{3}$, there will be 4 repulsive pairs and
	\begin{equation*}
		\mu(B)\leq (8 + 4)\times \frac{1}{64} = \frac{3}{16},
	\end{equation*}
	which contradicts $\mu(B) > \frac{3}{16}$, implying $|B|\geq\frac{2\sqrt{2}}{3}$. 
    Again, with the updated diameter, we have:
	\begin{equation*}
		\mu(B) \geq \frac{(2\sqrt{2} / 3)^{3\log_32}}{2.352741546983966} > 0.380202,
	\end{equation*}
	and this turns out to be the best approximations give the upper bound and level 2.\\
	
	Then, we can think of some consequences of such result.
	
	\begin{lemma}
		An optimal set of $\CC^3$ must intersect four basic sets of level 1.
	\end{lemma}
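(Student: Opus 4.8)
The plan is to turn the measure lower bound just obtained for the optimal set into a counting statement about how many level-$1$ cubes it must meet. Let $B$ be an optimal set of $\CC^3$. From the preceding diameter refinements we have $|B| \geq \frac{2\sqrt2}{3}$ and hence the lower bound $\mu(B) > 0.380202$. The crucial numerical observation is that this quantity exceeds $\tfrac{3}{8} = 0.375$, so $B$ cannot draw all of its mass from only three of the eight level-$1$ basic sets.

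To make this precise I would first record the mass of a single level-$1$ cube. By self-similarity each $D_i \cap \CC^3 = S_i(\CC^3)$, and since $S_i$ scales by $1/3$ and $s_3 = 3\log_3 2$, we get $\CH^{s_3}(D_i \cap \CC^3) = 3^{-s_3}\CH^{s_3}(\CC^3) = 2^{-3}\CH^{s_3}(\CC^3)$, \ie $\mu(D_i) = \tfrac18$ for every $i$. Because the supporting measure $\mu$ is carried by $\CC^3 \subset \bigcup_{i=1}^{8} D_i$ and the eight pieces overlap only on a $\mu$-null set, $\mu$ is additive across them: $\mu(B) = \sum_{i=1}^{8}\mu(B\cap D_i)$.

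Now suppose, for contradiction, that $B$ meets at most three of the basic sets $D_1,\dots,D_8$. Then $B\cap\CC^3$ lies inside the union of those three cubes, so
\[
\mu(B) \;=\; \sum_{i\,:\,B\cap D_i\neq\emptyset}\mu(B\cap D_i)\;\leq\;3\cdot\tfrac18\;=\;\tfrac38\;=\;0.375,
\]
contradicting $\mu(B) > 0.380202$. Hence $B$ must intersect at least four level-$1$ basic sets.

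The argument is short, and the only ``obstacle'' is the threshold crossing $\mu(B) > \tfrac38$: everything hinges on having already pushed the diameter bound up to $|B|\geq\frac{2\sqrt2}{3}$, since the weaker bounds would only have given $\mu(B) > \tfrac18$ or $\tfrac{3}{16}$, neither of which clears $\tfrac38$. Thus the real work is not in this lemma itself but in the repulsive-pair computations that produced the sharpened measure estimate; once that estimate is in hand, the counting is immediate.
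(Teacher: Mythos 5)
Your proof is correct and takes essentially the same route as the paper: both arguments hinge on the threshold $\mu(B) \geq 0.380202 > \tfrac{3}{8}$ obtained from the diameter bound $|B| \geq \tfrac{2\sqrt{2}}{3}$, and then count at most $\tfrac18$ of mass per level-$1$ basic set to force a contradiction. If anything, your version is the cleaner rendering: you make explicit the facts the paper leaves implicit (that $\mu(D_i) = \tfrac18$ by self-similarity and that $\mu$ is additive over the pairwise disjoint cubes $D_i$), whereas the paper runs a two-stage contradiction (first ruling out two cubes, then three) and re-invokes the repulsive pairs \eqref{ex:rep-pairs} at a point where they are only needed as the source of the earlier estimate, not as a new ingredient.
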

	
	\begin{proof}
		First, for the sake of contradiction, we suppose that all optimal sets of $\CC^3$ covers at most two basic sets of level 1.\\
		However, from \eqref{ex:rep-pairs}, we found repulsive pairs such that $\mu(B) \geq 0.380202 > \frac{3}{8}$, there must have been an optimal set that intersects at least three of the basic sets.\\
		Then, suppose that all optimal sets of $\CC^3$ covers at most three basic sets of level 1.\\
		Similarly, we can suppose that it intersects $D_1$, $D_2$, and $D_3$. We can use \eqref{ex:rep-pairs} from $D_1$ and $D_2$, and we can assume all basic sets of level 2 from $D_3$ are included. Hence, we have $\mu(B) > 0.380202 > \frac{3}{8}$, there must have been an optimal set that intersects at least four of the basic sets.
	\end{proof}
	
	Since we now have four basic sets of level 1 includes, we can try to find better bounds from that.\\
	
	With four basic sets of level 1 included, we claim that $|B|\geq\frac{2\sqrt{26}}{9}$ where we have to consider two cases, without loss of generality: 
	\begin{itemize}
		\item When the intersection is $D_1,D_2,D_3,D_4$, or the four basic sets belong to the same face.\\
		Here, this aligns with the Hausdorff distance between the following eight repulsive pairs of order 2:
		\begin{align}
			\{D_{1,1}, D_{4,7}\}, \{D_{1,2}, D_{4,8}\}, \{D_{1,3}, D_{4,5}\}, \{D_{1,4}, D_{4,5}\}, \notag\\ 
			\{D_{1,5}, D_{4,4}\}, \{D_{1,6}, D_{4,3}\}, \{D_{1,7}, D_{4,1}\}, \{D_{1,8}, D_{4,2}\}. \label{eq::pair4-1}
		\end{align}
		\item When the intersections are $D_1,D_2,D_3,D_5$, or the four basic sets spans all faces.\\
		Here, we consider the Hausdorff distance between the following eight repulsive pairs of order 2:
		\begin{align}
			\{D_{2,1}, D_{5,7}\}, \{D_{2,2}, D_{5,8}\}, \{D_{2,3}, D_{5,6}\}, \{D_{2,4}, D_{5,5}\}, \notag\\
			\{D_{2,6}, D_{3,1}\}, \{D_{2,8}, D_{3,2}\}, \{D_{3,3}, D_{5,1}\}, \{D_{3,4}, D_{5,2}\}. \label{eq::pair4-2}
		\end{align}
	\end{itemize}
	Specifically, here we have:
	\begin{align*}
		d_{\rm H}(D_{1,1},D_{4,7}) &= \cdots = d_{\rm H}(D_{3,8},D_{2,2}) = d_{\rm H}(D_{2,1},D_{5,7}) = \cdots = d_{\rm H}(D_{3,4},D_{5,2})\\
		&= \sqrt{\frac{2^2 + 6^2 + 8^2}{9^2}} = \frac{2\sqrt{26}}{9}.
	\end{align*}
	Hence, by if $|B| < \frac{2\sqrt{26}}{9}$, there will be (at least) eight repulsive pairs (for both cases), and hence:
	\begin{equation*}
		\mu(B)\leq (16+8)\times \frac{1}{64} = \frac{3}{8},
	\end{equation*}
	which is a contradiction to that $\mu(B) > \frac{3}{8}$, which implies that $|B|\geq\frac{2\sqrt{26}}{9}$.\\
	With the updated diameter, we have:
	\begin{equation*}
		\mu(B) \geq \frac{(2\sqrt{26} / 9)^{3\log_32}}{2.352741546983966} > 0.538462 > \frac{1}{2}.
	\end{equation*}
	
	Now, we may use this as the conclusion to a stronger result:
	
	\begin{proposition}
		The optimal set of $\CC^3$ that covers five basic sets of level 1.
	\end{proposition}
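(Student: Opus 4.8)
The plan is to repeat the contradiction scheme used in the preceding lemma, now driven by the bound $\mu(B) > 1/2$ that we have just obtained from the diameter estimate $|B| \geq \frac{2\sqrt{26}}{9}$. First I would suppose, for contradiction, that the optimal set $B$ of $\CC^3$ meets at most four of the eight level-$1$ basic sets $D_1,\dots,D_{2^3}$.

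The key computation is that each level-$1$ basic set carries exactly one eighth of the total mass. Indeed $D_i\cap\CC^3 = S_i(\CC^3)$ is a copy of $\CC^3$ scaled by $1/3$, so by the self-similarity of Hausdorff measure together with $s_3 = 3\log_3 2$ we get $\CH^{s_3}(D_i\cap\CC^3) = 3^{-s_3}\CH^{s_3}(\CC^3) = \tfrac18\CH^{s_3}(\CC^3)$, whence $\mu(D_i) = 1/8$. Since $\mu$ is carried by $\CC^3 = \bigcup_{i=1}^{8}(D_i\cap\CC^3)$, monotonicity and subadditivity of $\mu$ give $\mu(B) = \mu(B\cap\CC^3) \leq \sum_{i\,:\,B\cap D_i\neq\emptyset}\mu(D_i)$.

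Putting these together, if $B$ met at most four of the basic sets then $\mu(B)\leq 4\cdot\tfrac18 = \tfrac12$, contradicting the bound $\mu(B) > 0.538462 > \tfrac12$ established above. Hence $B$ must intersect at least five of the level-$1$ basic sets, which is exactly the claim.

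I expect no genuine obstacle in this final step: the substantive work already lies in the successive diameter refinements culminating in $|B|\geq\frac{2\sqrt{26}}{9}$, and once $\mu(B) > 1/2$ is in hand the conclusion is a one-line pigeonhole. The only point requiring a little care is the additivity of $\mu$ across the level-$1$ pieces, which is immediate because they are disjoint within $\CC^3$. It is worth noting that the bound $\mu(B) > 1/2$ excludes only four or fewer basic sets; pushing the count to six or more would require a further diameter improvement and a correspondingly larger collection of repulsive pairs.
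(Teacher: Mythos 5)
Your proposal is correct and takes essentially the same route as the paper: assume for contradiction that $B$ meets at most four of the level-$1$ basic sets, bound $\mu(B)\leq 4\cdot\tfrac{1}{8}=\tfrac{1}{2}$, and contradict the bound $\mu(B)\geq 0.538462>\tfrac{1}{2}$ already derived from $|B|\geq\tfrac{2\sqrt{26}}{9}$ and the upper bound on $\CH^{s_3}(\CC^3)$. The only difference is presentational: you spell out the step $\mu(D_i)=\tfrac{1}{8}$ via self-similarity and subadditivity, which the paper's one-line proof leaves implicit.
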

	
	\begin{proof}
		For the sake of contradiction, we suppose that all optimal sets of $\CC^3$ covers at most four basic sets of level 1.\\
		However, from \eqref{eq::pair4-1} or \eqref{eq::pair4-2}, we found repulsive pairs such that $\mu(B) \geq 0.538462 > \frac{1}{2}$, there must have been an optimal set that intersects at least five of the basic sets.
	\end{proof}
	
	\begin{proposition}
		$\CH^{s_3}(\CC^3) > (2\sqrt{26} / 9)^{3\log_32} \geq 1.811621$.
	\end{proposition}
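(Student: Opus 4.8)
The plan is to read the inequality directly off the optimal-set characterization, so that almost all of the real work has in fact already been done in the diameter estimates above. Let $B$ be an optimal set for $\CC^3$. Since $B$ attains the minimum in Theorem~\ref{thm::cvx-set}, we have the exact identity
\[
\CH^{s_3}(\CC^3) = \frac{|B|^{s_3}}{\mu(B)},
\]
so a lower bound on $\CH^{s_3}(\CC^3)$ reduces to a lower bound on $|B|$ together with an upper bound on $\mu(B)$.

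For the diameter I would simply quote the estimate $|B|\ge \tfrac{2\sqrt{26}}{9}$ obtained in the preceding discussion. This is the substantive ingredient: it was produced by the bootstrapping loop in which the repulsive pairs of \eqref{eq::pair4-1} and \eqref{eq::pair4-2} push $\mu(B)$ upward, which—through the upper bound $\CH^{s_3}(\CC^3)\le 2.352741546983966$ and the identity $\mu(B)=|B|^{s_3}/\CH^{s_3}(\CC^3)$—in turn forces the diameter upward until $B$ must meet five level-$1$ basic sets. For the measure I would use only that $\mu$ is a probability measure, so $\mu(B)\le 1$; to get the \emph{strict} inequality demanded by the statement I would invoke Proposition~\ref{thm::upbd-prob-msre}, noting that the repulsive pairs available at diameter $\tfrac{2\sqrt{26}}{9}$ have positive count, whence $\mu(B)\le 1-N_k(L)\,2^{d-kd-1}<1$. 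Substituting both estimates into the identity gives
\[
\CH^{s_3}(\CC^3)=\frac{|B|^{s_3}}{\mu(B)} > |B|^{s_3} \ge \Big(\tfrac{2\sqrt{26}}{9}\Big)^{3\log_3 2},
\]
after which the stated numerical bound is a direct evaluation of $\big(2\sqrt{26}/9\big)^{3\log_3 2}$.

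The main obstacle here is not in the present proposition at all but upstream, in establishing the diameter bound $|B|\ge \tfrac{2\sqrt{26}}{9}$; once that estimate is in hand, the result is essentially a one-line corollary of pairing it with $\mu(B)\le 1$. The only bookkeeping I would be careful about is that the diameter bound and the strict bound $\mu(B)<1$ are asserted for the \emph{same} fixed optimal set $B$, and that $B$ is taken to realize the minimum of Theorem~\ref{thm::cvx-set} rather than merely to be an admissible competitor; with that fixed, the argument closes.
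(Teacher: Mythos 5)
Your main line --- fix an optimal $B$ realizing the minimum in Theorem \ref{thm::cvx-set}, quote the diameter bound $|B|\ge 2\sqrt{26}/9$ from the preceding bootstrapping, and divide by $\mu(B)\le 1$ --- is exactly the paper's proof, which consists of precisely those two ingredients. However, your extra step toward the strict inequality $\mu(B)<1$ does not work. Proposition \ref{thm::upbd-prob-msre} presupposes $|B|\le L$ and counts in $N_k(L)$ only pairs whose separation is \emph{strictly greater} than $L$; the pairs of \eqref{eq::pair4-1}--\eqref{eq::pair4-2} have Hausdorff distance exactly $2\sqrt{26}/9$, so they cease to be repulsive for $B$ at the very moment you learn $|B|\ge 2\sqrt{26}/9$. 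Since the only a priori upper bound is $|B|\le|\CC^3|=\sqrt3$, while the largest level-$2$ pair separation available to the matching is $d_{\rm H}(D_{1,1},D_{8,8})=8\sqrt3/9\approx 1.54<\sqrt3$ (and at level $k$ it is $(1-3^{-k})\sqrt3<\sqrt3$), you cannot rule out $N_k(|B|)=0$, in which case \eqref{eq::upbd-prob-msre} returns only $\mu(B)\le 1$. The paper shares this weakness --- its one-line proof likewise yields only $\ge$ --- but if strictness is wanted, a sound fix is different from yours: if $\mu(B)=1$, then $\CH^{s_3}(\CC^3\setminus B)=0$, and since the self-similar measure has full support, the closed convex set $\overline{B}$ must contain $\CC^3$, forcing $|B|\ge\sqrt3$ and hence $\CH^{s_3}(\CC^3)=|B|^{s_3}\ge 3^{s_3/2}\approx 2.83$, contradicting the upper bound $2.3527\ldots$ of Theorem \ref{thm::upperbound}.

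Separately, your closing claim that ``the stated numerical bound is a direct evaluation'' is false and should have been caught: $2\sqrt{26}/9\approx 1.1331$ and $s_3=3\log_32\approx 1.8928$, so $(2\sqrt{26}/9)^{s_3}\approx 1.2669$, which is \emph{less} than $1.811621$. This is consistent with the paper's own earlier computation $\mu(B)\ge (2\sqrt{26}/9)^{3\log_32}/2.3527\ldots>0.538462$, which requires $(2\sqrt{26}/9)^{3\log_32}\approx 1.267$. The second inequality in the proposition as printed is therefore a numerical error in the paper, and certifying it as a ``direct evaluation'' is a genuine defect of the proposal: as written, your argument (and the paper's) establishes only $\CH^{s_3}(\CC^3)\ge(2\sqrt{26}/9)^{3\log_32}\approx 1.2669$.
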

	
	\begin{proof}
		This is directly from the lower bound of $|B| \geq \frac{2\sqrt{26}}{9}$ and $\mu(B) \leq 1$.
	\end{proof}
	
	\begin{remark}
		For this, we can quite trivially note that:
		\begin{equation*}
			\CH^{s_3}(\CC^3) > 1.8 > \CH^{s_2}(\CC^2).
		\end{equation*}
	\end{remark}

    From this, we can conclude that the Hausdorff measure of the third product of the ternary Cantor set has higher $2\log_32$-dimensional Hausdorff measure compared to the second product of the ternary Cantor set with respect to the $\log_32$-dimensional Hausdorff measure.
    
	
	\printbibliography
	
\end{document}